\newtheorem{definition}{Definition}[section]
\newtheorem{lemma}[definition]{Lemma}
\newtheorem{theorem}[definition]{Theorem}
\newtheorem{corollary}[definition]{Corollary}
\newtheorem{remark}[definition]{Remark}
\newcommand{\bvec}[1]{\mbox{\boldmath $#1$}}
\title{Bounds on Walsh coefficients by dyadic difference and a new Koksma-Hlawka type inequality for Quasi-Monte Carlo integration}
\author{Takehito Yoshiki
\thanks{Graduate School of Mathematical Sciences, The University of Tokyo, 3-8-1 Komaba, Meguro-ku,
Tokyo 153-8914 (yosiki@ms.u-tokyo.ac.jp). The work was supported by the Program for Leading Graduate Schools, MEXT, Japan.}}
\begin{document}
\pagestyle{myheadings}
\markboth{T.Yoshiki}{Bounds on Walsh coefficients}
\maketitle
%%%%%%%%%%%%%%%%%%%%%%%%%%%%%%%%%%%%%%%%%%%%%%%
% begin: Abstract Abstract Abstract Abstract Abstract
%%%%%%%%%%%%%%%%%%%%%%%%%%%%%%%%%%%%%%%%%%%%%%%
\begin{abstract}
In this paper we give a new Koksma-Hlawka type inequality for Quasi-Monte Carlo (QMC) integration.
QMC integration of a function $f\colon[0,1)^s\rightarrow \mathbb{R}$ by a finite point set
 $\mathcal{P}\subset [0,1)^s$ is the approximation
 of the integral $I(f):=\int_{[0,1)^s}f(\mathbf{x})\,d\mathbf{x}$
by the average $I_{\mathcal{P}}(f):=\frac{1}{|\mathcal{P}|}\sum_{\mathbf{x} \in \mathcal{P}}f(\mathbf{x})$. 
We treat a certain class of point sets $\mathcal{P}$ called digital nets.
 A Koksma-Hlawka type inequality is an inequality bounding the integration error $\mathrm{Err}(f;\mathcal{P}):=I(f)-I_{\mathcal{P}}(f)$
by a bound of the form $|\mathrm{Err}(f;\mathcal{P})|\le C\cdot \|f\|\cdot D(\mathcal{P})$.
We can obtain a Koksma-Hlawka type inequality by estimating bounds on $|\hat{f}(\mathbf{k})|$,
where $\hat{f}(\mathbf{k})$ is a generalized Fourier coefficient with respect to the Walsh system. In this paper we prove bounds on Walsh coefficients $\hat{f}(\mathbf{k})$
 by introducing an operator called `dyadic difference' $\partial_{i,n}$.
By converting dyadic differences $\partial_{i,n}$ to derivatives $\frac{\partial }{\partial x_i}$, we get 
a new bound on $|\hat{f}(\mathbf{k})|$ for a function $f$ whose mixed partial derivatives up to order $\alpha$ in each variable are continuous.
This new bound is smaller than the known bound on $|\hat{f}(\mathbf{k})|$ under some condition.
The new Koksma-Hlawka inequality is derived 
using this new bound on the Walsh coefficients.
\end{abstract}
%\begin{keywords}
%Quasi-Monte Carlo integration, Walsh coefficients, Koksma-Hlaka inequality.
%(AMS:65D30, 65D32)
%\end{keywords}
%\begin{AMS}
%65D30, 65D32.
%\end{AMS}
\section{Introduction and the main results}
Quasi-Monte Carlo(QMC) integration of a function $f\colon[0,1)^s\rightarrow \mathbb{R}$ by a finite point set $\mathcal{P}\subset [0,1)^s$ is the approximation of the integral $I(f):=\int_{[0,1)^s}f(\mathbf{x})\,d\mathbf{x}$
by the average $I_{\mathcal{P}}(f):=\frac{1}{|\mathcal{P}|}\sum_{\mathbf{x} \in \mathcal{P}}f(\mathbf{x})$
(see \cite{Dick_Pill}, \cite{Nied} and \cite{Sloan} for details). 
We want to find quadrature point sets $\mathcal{P}$ making the absolute value of the integration error $|\mathrm{Err}(f;\mathcal{P})|:=|I(f)-I_{\mathcal{P}}(f)|$ small for a set of functions $f$.
This problem is formulated as follows: We consider a function space $H$ with a norm $\| f\|_H$
and the worst case error $\sup_{\| f\|_H\le 1}|\mathrm{Err}(f;\mathcal{P})|$
 by a QMC rule using the point set $\mathcal{P}$ (for example, see \cite{Dick_Pill}, \cite{Hickernell} for details).
Then, it holds that, for any $f\in H$,
\begin{eqnarray}
\label{KHoriginal}
|\mathrm{Err}(f;\mathcal{P})|\le \| f\|_H\times \sup_{\| f\|_H\le 1}|\mathrm{Err}(f;\mathcal{P})|.
\end{eqnarray}
Thus in order to make $|\mathrm{Err}(f;\mathcal{P})|$ small,
 we have to obtain quadrature point sets $\mathcal{P}$ making the worst case error  $\sup_{\| f\|_H\le 1}|\mathrm{Err}(f;\mathcal{P})|$ small.

We often treat a point set $\mathcal{P}$ called `digital net' (for example, see \cite{Nied}).
A digital net $\mathcal{P}$ is defined as follows.
Let $n,m,b\ge 1$ be integers with $n\ge m$. Let $0\le h< b^{m}$ be an integer
and $C_1,\dots,C_s$ be $n\times m$ matrices over the finite group $\mathbb{Z}_b=\mathbb{Z}/b\mathbb{Z}$.
We write the $b$-adic expansion $h=\sum_{j=1}^mh_jb^{j-1}$ and take a vector $\mathbf{h}=(h_1,\dots,h_m)\in(\mathbb{Z}_b^m)^\top$,
where $h_{j}$ is considered to be an element in $\mathbb{Z}_b$.
For $1\le i\le s$, we define the vector $(y_{h,i,1},\dots,y_{h,i,n})=\mathbf{h}\cdot(C_i)^{\top}$ and 
a real number $x_i(h)=\sum_{1\le j\le n}y_{h,i,j}b^{-j}\in[0,1)$, 
where $y_{h,i,j}$ is considered to be an element of $\{0,\dots, b-1\}\subset\mathbb{Z}$.
Then we define a digital net $\mathcal{P}$ by $\{\mathbf{x}_0,\cdots,\mathbf{x}_{b^m-1}\}$ where $\mathbf{x}_h=(x_i(h))_{1\le i\le s}$.
We define the dual net $\mathcal{P}^\bot$ \cite{Dick_Pill05,NiedP}, which is essential to analyze the integration error:
\begin{eqnarray*}
\mathcal{P}^\bot
 :=\{ \mathbf{k}=(k_1,\dots, k_s)\in\mathbb{N}_0^s\mid
 C_1^{\top}\vec{k}_1+\cdots+C_s^{\top}\vec{k}_s=\mathbf{0}\in\mathbb{Z}_b^m\} ,
\end{eqnarray*}
where $\vec{k}_i=(\kappa _{i,1},\dots,\kappa _{i,n})^{\top}$ for
$k_i$ with $b$-adic expression $k_i=\sum_{j\ge 1}\kappa _{i,j}b^{j-1}$.
Here $\kappa _{i,j}$ is considered to be an element of $\mathbb{Z}_b$.
Throughout this paper, when we take a point set $\mathcal{P}$,
we assume that $\mathcal{P}$ is a digital net with $b=2$.

In the classical theory, many researchers studied the integration error of 
a function $f$ with bounded variation (or function with square integrable partial derivatives up to first order in each variable) (for example, see \cite{Dick_Pill}, \cite{Kuiper}).
An extension to smooth periodic functions was established
in \cite{Dick_periodic},
while a further extension to smooth (non-periodic) functions
was shown in \cite{Dick_nonperiodic}.
The QMC rules constructed in these papers, called higher order QMC rules,
achieve (up to powers of $\log N$) the optimal rate of convergence.
See also \cite{Dick_decay2}
for more background on higher order QMC rules.
The purpose of this paper is
to substantially improve the constants in the bounds
%%%%
on integration error
%%%%
in \cite{Dick_nonperiodic},
which is crucial in problems
in uncertainty quantification \cite{Dick_garkin, GS14}.
In particular, \cite{GS14} point out that
the large constants from \cite{Dick_nonperiodic} cause problems
in the CBC construction of interlaced polynomial lattice rules,
%%%%%
which is one of construction methods to obtain point sets whose worst case error achieves the optimal order.
%%%%%
To avoid this problem, they suggest to use much smaller constants
which are more realistic.
This paper provides the theoretical justification for doing so.

%%%
We explain the details.
%%%
Dick et al. \cite{Dick_garkin} introduced a smooth function space whose functions $f$ satisfy 
that their norms (\ref{dick_norm}) (see below) are finite.
If $f$ is a function whose mixed partial derivatives up to order $\alpha$ in each variable are continuous,
then $f$ is contained in this space.
This space has some parameters called weights
 $\{\gamma_v\}_{v\subset S}\subset\mathbb{R}_{>0}=\{ x\in \mathbb{R}: x>0 \}$
,where $S:=\{1,\dots, s\}$,
which model the importance of different coordinate projections, 
see \cite{SW98}.

To state their results,
we need modified dual spaces which correspond to the subsets $v\subset S$.
For $\mathbf{k}_v \in \mathbb{N}^{|v|}$,
 let $(\mathbf{k}_v; \mathbf{0})$ denote the vector whose $j$th component is $k_j$ if $j\in v$ and $0$ otherwise.
We define the dual space which corresponds to the subset $\phi\neq v\subset S$ by
$\mathcal{P}_v^\bot :=\{ \mathbf{k}_v\in\mathbb{N}^{|v|}\mid\mathbf{k}=(\mathbf{k}_v;\mathbf{0})\in\mathcal{P}^\bot\}$
(note that none of the components in $v$ is $0$).
Let $1\le r,r',q\le \infty$ with $1/r+1/r'=1$
and
\begin{eqnarray*}
\mu_{\alpha}((l_1,\dots,l_{|v|}))=\sum_{i=1}^{|v|}\sum_{j\le \alpha}(a_{i,j}+1)
\end{eqnarray*}
for $l_i$ with dyadic expansion $l_i=\sum_{j=1}^{N_i}2^{a_{i,j}}$, with $a_{i,1}>\cdots>a_{i,N_i}$.
They showed the following bound on the worst case error
(Dick et al. \cite{Dick_garkin} also showed the results for a digital net with $b\ge 2$):
\begin{eqnarray*}
\sup_{\| f\|_{s,\alpha,\gamma ,q,r}\le 1} 
|\mathrm{Err}(f;\mathcal{P})|\le
e_{s,\alpha,\gamma ,r'}(\mathcal{P}),
\end{eqnarray*}
with
\begin{eqnarray}
\label{wce_dick}
e_{s,\alpha,\gamma ,r'}(\mathcal{P})=
\left(\sum_{\phi \neq v\subset S}\left( C_{\alpha}^{|v|} \gamma_{v}\sum_{\mathbf{k}_{v}\in\mathcal{P}_{v}^\bot }2^{-\mu _{\alpha}(\mathbf{k}_{v})} \right)^{r'} \right) ^{1/r'}.
\end{eqnarray}
This implies the following inequality of the form (\ref{KHoriginal}):
\begin{eqnarray}
\label{KH_dick}
\left| \mathrm{Err}(f;\mathcal{P})\right|
\le 
\| f\|_{s,\alpha,\gamma ,q,r}\times
e_{s,\alpha,\gamma ,r'}(\mathcal{P}),
\end{eqnarray}
where
%\newpage
\begin{eqnarray}
\label{dick_norm}
 & & \|f\|_{s,\alpha , {\gamma},q,r}
 :=
 \Bigg( \sum_{ {u}\subseteq S} \Bigg( \gamma_ {u}^{-q}
 \sum_{ {v}\subseteq {u}} \sum_{\bvec{\tau}_{ {u}\setminus {v}}
 \in \{ 1,\dots ,\alpha -1 \}^{| {u}\setminus {v}|}} \\
 & &\qquad\qquad\quad
 \int_{[0,1]^{| {v}|}} \bigg|\int_{[0,1]^{s-| {v}|}} \!
(\partial ^{(\bvec{\alpha}_{ {v}}
,\bvec{\tau}_{ {u}\setminus {v}},\bvec{0})}
_{\bvec{y}} f)
(\bvec{y}) \,\, \mathrm{d} \bvec{y}
_{ S \setminus {v}}
 \bigg|^q \, \mathrm{d} \bvec{y}_ {v} \Bigg)^{r/q} \Bigg)^{1/r}, \nonumber
\end{eqnarray}
with the obvious modifications if $q$ or $r$ is infinite.
\footnote{The norm in \cite[Definition 3.3]{Dick_garkin} has been corrected 
in arXiv:1309.4624v3. The correct version is restated here in Eq. (\ref{dick_norm}).}
Here $(\alpha _v,\tau _{u\backslash v},\mathbf{0})$ denotes a sequence $(\nu_j)_j$ with $\nu_j=\alpha$ for $j\in v$, $\nu_j=\tau_j$ for $j\in u\backslash v$,
and $\nu_j=0$ for $j\not\in u$.
And we write $f^{(n_1,\dots,n_s )}=\partial ^{n_1+\cdots +n_s}f/\partial x_1^{n_1}\cdots\partial x_s^{n_s}$.

Based on these bounds on the integration error,
Dick constructed `interlaced digital nets' to obtain a point set
with small integration error (for example, see \cite{Dick_periodic},\cite{Dick_nonperiodic}).
He showed that the worst case error of
this type of point set achieves the order $O(N^{-\alpha}(\log N)^{s\alpha})$
in terms of the cardinality $N$ of a point set (see \cite{Dick_nonperiodic}).
This is known to be optimal up to log terms (see \cite{sharygin}). 
In \cite{BJJF,BJJGDF} there is also a component-by-component (CBC) algorithm
to obtain point sets which achieve the same order.
%, which is another construction method to obtain point sets whose worst case error achieve the same order.
%%%%%%%%%%%%%%%%%%%%%%

There is another algorithm to find good quadrature point sets for QMC
for integrands with large enough smoothness $\alpha$.
This was introduced by Matsumoto, Saito and Matoba \cite{MSM}. 
They define the Walsh Figure of Merit (WAFOM), which is defined by the discretization of the upper bound of the worst case error of the form (\ref{wce_dick}).
The advantage of WAFOM is that we can compute it on the computer in reasonable time.
This property enables us to find a point set with small integration error by computer search. 
In fact, there are some algorithms for finding good point sets for QMC (see \cite{Harase},\cite{HO}).

%%%%%%%%%%%%%%%%%%%%%%
In this way, to find good quadrature point sets, we need an inequality of the form (\ref{KHoriginal}),
which bounds the integration error by the product of a norm of $f$ and a figure of merit of $\mathcal{P}$.
These types of inequalities are called Koksma-Hlawka inequalities
(for example, see \cite{Kuiper} for details).
In the following, we give a new Koksma-Hlawka type inequality to bound the integration error of
smooth functions better than the inequality (\ref{KH_dick}) under some condition.
\begin{theorem}
\label{yKH}
Let $\alpha\in\mathbb{N}\cup\{\infty\}$ such that $\alpha\ge 2$.
We assume that a function f satisfies that its mixed partial derivatives up to order $\alpha$ in each variable $x_i$ are continuous on $[0,1]^s$,
 and $1\le p, q, q'\le \infty$ such that $1/q +1/q' = 1$.
Then we have
\begin{eqnarray*}
\left|\mathrm{Err}(f;\mathcal{P})\right|
\le \| f\|_{\mathcal{B}_{\alpha},\gamma ,p,q'}
\times\mathcal{W}_{\alpha,\gamma ,q}(\mathcal{P}),
\end{eqnarray*}
where
\begin{eqnarray*}
\mathcal{W}_{\alpha,\gamma ,q}(\mathcal{P})=\left(\sum_{\phi \neq v\subset S}\left( \gamma_{v}\sum_{\mathbf{k}_{v}\in\mathcal{P}_{v}^\bot} 2^{-\mu '_{\alpha}(\mathbf{k}_{v})} \right) ^q\right) ^{1/q},
\end{eqnarray*}
\begin{eqnarray}
\label{y_norm}
\| f\|_{\mathcal{B}_{\alpha},\gamma ,p,q'}=\left( \sum  _{\phi\neq v\subset S} \left(  \gamma _{v}^{-1}2^{\frac{|v|}{p}}\sup_{\alpha_v\in \{1,\dots ,\alpha \} ^{|v|}}\|f^{(\alpha_v)}\|_{p}\right) ^{q'} \right) ^{1/q'},
\end{eqnarray}
and where
\begin{eqnarray*}
\| f^{(\alpha _v)}\|_p=\left(\int_{[0,1)^{|v|}}\left|\int_{[0,1)^{|S\backslash v|}}\frac{\partial ^{|\alpha _v|}f}{\partial\mathbf{x}_{\alpha _v}}\, d\mathbf{x}_{S\backslash v}\right| ^p d\mathbf{x}_v\right) ^{1/p},
\end{eqnarray*}
with the obvious modifications if either $p,q$ or $q'$ is infinite.
\end{theorem}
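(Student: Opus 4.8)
\medskip\noindent\textbf{Proof proposal.}
The plan is to run the standard digital-net argument---pass from the integration error to a sum over the dual net through the Walsh expansion, insert the bound on Walsh coefficients, and close with H\"older's inequality---while tracking supports, weights and $L^{p}$ norms carefully enough that the norm $\|f\|_{\mathcal{B}_{\alpha},\gamma,p,q'}$ and the quantity $\mathcal{W}_{\alpha,\gamma,q}(\mathcal{P})$ appear exactly. First I would record that, since $\alpha\ge 2$, the bound on Walsh coefficients proved in this paper makes $\sum_{\mathbf{k}\in\mathbb{N}_{0}^{s}}|\hat{f}(\mathbf{k})|$ finite: it bounds $|\hat{f}(\mathbf{k})|$ by a constant times $2^{-\mu'_{\alpha}(\mathbf{k})}$, and $\sum_{k\ge 1}2^{-\mu'_{\alpha}(k)}<\infty$ for $\alpha\ge 2$ while the multivariate sum factorizes over coordinates. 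Hence the Walsh series of $f$ converges absolutely and uniformly, and since its dyadic-block partial sums $\sum_{\|\mathbf{k}\|_{\infty}<2^{n}}\hat{f}(\mathbf{k})\,\mathrm{wal}_{\mathbf{k}}(\mathbf{x})$ equal the mean of $f$ over the dyadic box of side $2^{-n}$ containing $\mathbf{x}$, the continuity---indeed the right-continuity at dyadic rationals---of $f$ makes them converge to $f(\mathbf{x})$ at every $\mathbf{x}\in[0,1)^{s}$; so the whole series sums to $f$ pointwise, in particular at the dyadic-rational points forming $\mathcal{P}$. Combining this with the character-sum identity $|\mathcal{P}|^{-1}\sum_{\mathbf{x}\in\mathcal{P}}\mathrm{wal}_{\mathbf{k}}(\mathbf{x})=1$ if $\mathbf{k}\in\mathcal{P}^{\bot}$ and $0$ otherwise, with $I(f)=\hat{f}(\mathbf{0})$, and with $\mathbf{0}\in\mathcal{P}^{\bot}$, the absolute convergence lets me interchange sum and average to get
\[
\mathrm{Err}(f;\mathcal{P})=-\sum_{\mathbf{k}\in\mathcal{P}^{\bot}\setminus\{\mathbf{0}\}}\hat{f}(\mathbf{k}).
\]

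Next I would sort the dual net by the set of its nonzero coordinates. Writing $v=\{i:k_{i}\neq 0\}$, every $\mathbf{k}\in\mathcal{P}^{\bot}\setminus\{\mathbf{0}\}$ equals $(\mathbf{k}_{v};\mathbf{0})$ for a unique $\phi\neq v\subset S$ with $\mathbf{k}_{v}\in\mathcal{P}_{v}^{\bot}$, so the triangle inequality gives $|\mathrm{Err}(f;\mathcal{P})|\le\sum_{\phi\neq v\subset S}\sum_{\mathbf{k}_{v}\in\mathcal{P}_{v}^{\bot}}|\hat{f}((\mathbf{k}_{v};\mathbf{0}))|$. Since $\mathrm{wal}_{0}\equiv 1$, for fixed $v$ the coefficient $\hat{f}((\mathbf{k}_{v};\mathbf{0}))$ is exactly the $|v|$-dimensional Walsh coefficient at $\mathbf{k}_{v}$ of $g_{v}(\mathbf{x}_{v}):=\int_{[0,1)^{|S\setminus v|}}f\,d\mathbf{x}_{S\setminus v}$; differentiating under the integral sign, legitimate because the mixed partials of $f$ are continuous, gives $g_{v}^{(\alpha_{v})}=\int_{[0,1)^{|S\setminus v|}}f^{(\alpha_{v})}\,d\mathbf{x}_{S\setminus v}$, hence $\|g_{v}^{(\alpha_{v})}\|_{L^{p}([0,1)^{|v|})}=\|f^{(\alpha_{v})}\|_{p}$ in the notation of the theorem. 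Applying the bound on Walsh coefficients of this paper to $g_{v}$ in its $L^{p}$ form, and then enlarging the particular order of differentiation it produces to the supremum over $\alpha_{v}\in\{1,\dots,\alpha\}^{|v|}$, I obtain
\[
|\hat{f}((\mathbf{k}_{v};\mathbf{0}))|\le 2^{|v|/p}\,2^{-\mu'_{\alpha}(\mathbf{k}_{v})}\sup_{\alpha_{v}\in\{1,\dots,\alpha\}^{|v|}}\|f^{(\alpha_{v})}\|_{p},
\]
a bound whose $\mathbf{k}_{v}$-dependence is confined to the factor $2^{-\mu'_{\alpha}(\mathbf{k}_{v})}$.

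Summing this over $\mathbf{k}_{v}\in\mathcal{P}_{v}^{\bot}$ and inserting $\gamma_{v}\gamma_{v}^{-1}$ gives
\[
|\mathrm{Err}(f;\mathcal{P})|\le\sum_{\phi\neq v\subset S}\Bigl(\gamma_{v}^{-1}\,2^{|v|/p}\sup_{\alpha_{v}\in\{1,\dots,\alpha\}^{|v|}}\|f^{(\alpha_{v})}\|_{p}\Bigr)\Bigl(\gamma_{v}\sum_{\mathbf{k}_{v}\in\mathcal{P}_{v}^{\bot}}2^{-\mu'_{\alpha}(\mathbf{k}_{v})}\Bigr),
\]
and H\"older's inequality applied to the sum over $v$ with conjugate exponents $q'$ and $q$ splits the right-hand side into $\|f\|_{\mathcal{B}_{\alpha},\gamma,p,q'}\cdot\mathcal{W}_{\alpha,\gamma,q}(\mathcal{P})$, which is the claimed bound; the cases in which $p$, $q$ or $q'$ is infinite follow from the usual conventions for these norms and sums. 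The substance of the theorem lies not in this chain of reductions but in the ingredient it consumes: the bound on $|\hat{f}(\mathbf{k})|$ obtained by iterating the dyadic difference $\partial_{i,n}$ and converting it into $\partial/\partial x_{i}$, and in particular in arranging that bound so that it carries the clean constant $2^{-\mu'_{\alpha}(\mathbf{k}_{v})}$ together with $2^{|v|/p}$ and an $L^{p}$ (rather than $L^{\infty}$) norm---this is where I expect the main obstacle. Within the present argument the only point needing care is the justification sketched above: that the multivariate Walsh expansion of a function with continuous mixed partials of order up to $\alpha\ge 2$ represents $f$ pointwise at the dyadic-rational points of the digital net.
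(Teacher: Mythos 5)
Your proposal is correct and follows essentially the same route as the paper: establish absolute summability of the Walsh coefficients from the $\alpha\ge 2$ bound, invoke pointwise representation of $f$ by its Walsh series (the paper cites \cite{GSY} for this where you sketch the dyadic-block argument directly), reduce to a sum over the dual net via the character-sum identity, insert the coefficient bound of Theorem \ref{yhat}, and finish with H\"older's inequality in $(q,q')$ over the subsets $v$. The only cosmetic differences are your explicit factorization through $g_v$ (which Theorem \ref{yhat} already absorbs into the definition of $\|\cdot\|_p$) and the sign in front of the dual-net sum, which is immaterial once absolute values are taken.
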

In this theorem
we write
\begin{eqnarray*}
\mu _{\alpha}'((l_1,\dots,l_{|v|}))=\sum_{i=1}^{|v|}\sum_{j=1}^{\min (\alpha,N_i)}(a_{i,j}+2)
\end{eqnarray*}
for $l_i=\sum_{j=1}^{N_i}2^{a_{i,j}}$
instead of Dick's weight function $\mu_{\alpha}((l_1,\dots,l_{|v|}))$.

This result yields a significant improvement of (\ref{KH_dick}).
This is crucial when using the bound in a CBC algorithm,
since a large constant (as it appears in \cite[Theorem 3.5]{Dick_garkin})
may make it impractical to perform the CBC construction in practice.
 For instance, \cite[Section~4.1]{GS14} write that
 {\it The resulting large values of the worst-case error bounds}
 [ referring to the large constants in \cite[Theorem~3.5]{Dick_garkin} ] 
{\it have been found to lead to generating vectors with bad projections.}

Additionally, we also include the case $\alpha = \infty$ which has not been studied before in the context of digital nets.
In \cite[Theorem 3.5]{Dick_garkin}, the case $\alpha = \infty$ is not included since in this case the constant $C_\alpha$ appearing in (\ref{KH_dick}) is infinite.
Furthermore, we can define another version of WAFOM when we consider this new bound (see \cite{Harase} for details).

%%%%%%%%%%%%%%%%%%%%%%
This theorem is based on the estimation of $\mathrm{Err}(f;\mathcal{P})$ by the Walsh coefficients.
%%%%%%%%%%%%%%%%%%%%%%
%Introduce necessary notation: Definition of Walsh functions and so on;
%%%%%%%%%%%%%%%%%%%%%%
Dyadic Walsh coefficients are defined as follows (see \cite{Fine},\cite{SWS} for details).
\begin{definition}[Walsh functions and Walsh coefficients]
Let $f\colon[0,1)^s\rightarrow \mathbb{R}$ and $\mathbf{k}=(k_1,\dots ,k_s)\in\mathbb{N}_0^s$.
We define the $\mathbf{k}$-th dyadic Walsh function $\mathrm{wal}_{\mathbf{k}}$ by
\begin{eqnarray*}
\mathrm{wal}_{\mathbf{k}}(\mathbf{x}):=\prod_{i=1}^s(-1)^{(\sum_{j\ge 1}a _{i,j}b _{i,j})},
\end{eqnarray*}
where for $1\le i\le s$, we write the dyadic expansion of $k_i$ by $k_i=\sum_{j\ge 1}a_{i,j} 2^{j-1}$
and $x_i$ by $x_i=\sum_{j\ge 1}b_{i,j}2^{-j}$,
 where for each $i$, infinitely many digits $b_{i,j}$ are $0$.
% where we chose $(b_{i,j})_{1\le j}$ not to satisfy $b_{i,j}=1$ for some $L$ and any $j>L$.

Using Walsh functions, we define the $\mathbf{k}$-th dyadic Walsh coefficient $\hat{f}(\mathbf{k})$ as follows:
\begin{eqnarray*}
\hat{f}(\mathbf{k})
:=\int_{[0,1)^s}f(\mathbf{x})\cdot\mathrm{wal}_{\mathbf{k}}(\mathbf{x})\, d\mathbf{x}.
\end{eqnarray*}
\end{definition}

We see that the integration error $\mathrm{Err}(f;\mathcal{P})$ by a digital net $\mathcal{P}$ can be represented by Walsh coefficients $\hat{f}(\mathbf{k})$ as follows
(\cite[Chapter 15]{Dick_Pill}):
\begin{eqnarray*}
%\label{err_hat}
\mathrm{Err}(f;\mathcal{P})=
\sum_{\mathbf{k}\in\mathcal{P}^\bot\backslash \{ \mathbf{0}\}}\hat{f}(\mathbf{k}) \qquad
\Big(
 =\sum_{\phi\neq v\subset S}\sum_{\mathbf{k}_v\in\mathcal{P}_v^\bot}\hat{f}(\mathbf{k}_v;\mathbf{0})
\Big) .
\end{eqnarray*}
The proof of Theorem \ref{yKH} is facilitated by an improved bound on the Walsh coefficients of smooth functions.
We show bounds on Walsh coefficients $\hat{f}(\mathbf{k})$ as follows:
\begin{theorem}
\label{yhat}
We assume the same assumptions as in Theorem \ref{yKH}. 
Let $\phi\neq v\subset\{1,\dots ,s\}=S$.
For $\mathbf{k}_v\in\mathbb{N}^{|v|}$, we have
\begin{eqnarray}
\label{yhatineq}
|\hat{f}((\mathbf{k}_v;\mathbf{0}))|
\leq 2^{\frac{|v|}{p}}\cdot 2^{-\mu ' _{\alpha}(\mathbf{k}_v)}
\cdot \| f^{( \min (\alpha,\mathbf{N}_{\mathbf{k}_v}))}\| _p,
\end{eqnarray} 
where $1\le p\le \infty$ and $\| \cdot \|_p$ is the norm defined in Theorem \ref{yKH}.
Here we define the symbol $\min(\alpha,\mathbf{N}_{\mathbf{l}})=(\min(\alpha,N_1),\dots,\min(\alpha,N_{|v|}))$
for $\mathbf{l}=(l_1,\dots,l_{|v|})$ with dyadic expansion $l_i=\sum_{j=1}^{N_i}2^{a_{i,j}}$.
\end{theorem}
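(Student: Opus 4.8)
Here is the plan. The proof splits into a one–variable estimate obtained via the dyadic difference operator $\partial_{i,n}$ and a recombination in $|v|$ dimensions by Hölder's inequality.

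\textbf{Reduction to one variable.} Since $\mathrm{wal}_{(\mathbf{k}_v;\mathbf{0})}(\mathbf{x})=\prod_{i\in v}\mathrm{wal}_{k_i}(x_i)$, I would write $\hat{f}((\mathbf{k}_v;\mathbf{0}))=\int_{[0,1)^{|v|}}g(\mathbf{x}_v)\prod_{i\in v}\mathrm{wal}_{k_i}(x_i)\,d\mathbf{x}_v$ with $g(\mathbf{x}_v):=\int_{[0,1)^{|S\setminus v|}}f\,d\mathbf{x}_{S\setminus v}$. Because the relevant mixed partials of $f$ are continuous on $[0,1]^s$, one may differentiate under the integral sign, so $g$ has continuous mixed partials up to order $\alpha$ in each variable and $\|f^{(\alpha_v)}\|_p=\|\partial^{\alpha_v}_{\mathbf{x}_v}g\|_{L^p([0,1)^{|v|})}$. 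Thus it suffices to bound $\int_{[0,1)^{|v|}}g\prod_{i\in v}\mathrm{wal}_{k_i}$ by the $L^p$-norm of $\partial^{\min(\alpha,\mathbf{N}_{\mathbf{k}_v})}_{\mathbf{x}_v}g$.

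\textbf{The one–variable dyadic–difference identity.} Fix a coordinate, write $k=\sum_{j=1}^{N}2^{a_j}$ with $a_1>\cdots>a_N\ge 0$ and $\mu:=\min(\alpha,N)$, and use $\partial_n\phi(x):=\phi(x)-\phi(x\oplus 2^{-n})$ (digitwise addition mod $2$). Since $x\mapsto x\oplus 2^{-(a_j+1)}$ is measure preserving and flips the sign of $\mathrm{wal}_k$, one gets $\hat\phi(k)=\tfrac12\widehat{\partial_{a_j+1}\phi}(k)$; the operators $\partial_{a_j+1}$ commute, so iterating for $j=1,\dots,\mu$ gives $\hat\phi(k)=2^{-\mu}\int_0^1(\partial_{a_1+1}\cdots\partial_{a_\mu+1}\phi)\,\mathrm{wal}_k$. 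That integrand is invariant under the $\mu$ bit-flips, hence the integral equals $2^\mu$ times its restriction to the set $E$ on which the bits $a_1+1,\dots,a_\mu+1$ all vanish ($|E|=2^{-\mu}$); on $E$ the XOR shifts are genuine translations, so $\partial_{a_1+1}\cdots\partial_{a_\mu+1}\phi$ becomes $\pm\Delta_{h_1}\cdots\Delta_{h_\mu}\phi$ with $\Delta_h\phi(x)=\phi(x+h)-\phi(x)$ and $h_j=2^{-(a_j+1)}$. Using $\Delta_{h_1}\cdots\Delta_{h_\mu}\phi(x)=\int_{\prod_j[0,h_j]}\phi^{(\mu)}(x+u_1+\cdots+u_\mu)\,du$ (valid as $\phi\in C^\mu$ and, for $x\in E$, the arguments stay in $[0,1)$), we obtain $\hat\phi(k)=\pm\int_0^1\phi^{(\mu)}(y)\sigma(y)\,dy$ with $|\sigma|\le\rho:=\mathbf{1}_E*\mathbf{1}_{[0,h_1]}*\cdots*\mathbf{1}_{[0,h_\mu]}$.

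\textbf{Kernel estimate, Hölder, and assembly.} Here $\|\rho\|_1=|E|\prod_j h_j=2^{-\sum_{j=1}^{\mu}(a_j+2)}$. For the sup-norm: $\mathbf{1}_{[0,h_1]}*\cdots*\mathbf{1}_{[0,h_\mu]}$ is supported in $[0,\sum_j h_j)\subset[0,2h_\mu)$ with supremum $\prod_{j=1}^{\mu-1}h_j$ (group the widest factor last, using $h_\mu>\sum_{j<\mu}h_j$), while $\mathbf{1}_E$ is $2h_\mu=2^{-a_\mu}$-periodic since $E$ is defined by constraints on bits finer than position $a_\mu$; hence $\mathbf{1}_E*\mathbf{1}_{[0,2h_\mu)}$ is the constant $2h_\mu|E|$, giving $\rho\le\big(\prod_{j<\mu}h_j\big)(\mathbf{1}_E*\mathbf{1}_{[0,2h_\mu)})\equiv 2\cdot 2^{-\sum_{j=1}^{\mu}(a_j+2)}=2\|\rho\|_1$. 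Interpolation yields $\|\rho\|_{p'}\le\|\rho\|_\infty^{1/p}\|\rho\|_1^{1/p'}\le 2^{1/p}2^{-\sum_{j=1}^{\mu}(a_j+2)}$, so Hölder gives $|\hat\phi(k)|\le 2^{1/p}2^{-\sum_{j=1}^{\mu}(a_j+2)}\|\phi^{(\mu)}\|_p$. Finally I would apply the identity $\int_0^1 h\,\mathrm{wal}_{k_i}\,dx_i=\pm\int_0^1(\partial^{\mu_i}_{x_i}h)\sigma_i\,dx_i$ ($\mu_i=\min(\alpha,N_i)$) successively over $i\in v$ — legitimate since after each step the function is again $C^\mu$ in the next variable — to rewrite $\hat{f}((\mathbf{k}_v;\mathbf{0}))=\pm\int_{[0,1)^{|v|}}(\partial^{\mu_v}_{\mathbf{x}_v}g)\prod_{i\in v}\sigma_i(x_i)\,d\mathbf{x}_v$, and conclude by Hölder together with $\|\prod_i\rho_i\|_{p'}=\prod_i\|\rho_i\|_{p'}$, $\partial^{\mu_v}_{\mathbf{x}_v}g=\int_{[0,1)^{|S\setminus v|}}\partial^{|\mu_v|}_{\mathbf{x}_{\mu_v}}f\,d\mathbf{x}_{S\setminus v}$, and $\mu_v=\min(\alpha,\mathbf{N}_{\mathbf{k}_v})$, which is exactly (\ref{yhatineq}).

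\textbf{Main obstacle.} The delicate step is the sharp bound $\|\rho\|_\infty\le 2\|\rho\|_1$: taking absolute values inside the iterated difference and controlling the result by a step-by-step Fubini over $E$, or by the crude submultiplicative bound $\|\mathbf{1}_E*\kappa\|_\infty\le\|\mathbf{1}_E\|_1\|\kappa\|_\infty$, loses a factor $2^{\mu-1}$; recovering the constant $2^{1/p}$ asserted by the theorem genuinely requires both the exact $2^{-a_\mu}$-periodicity of $\mathbf{1}_E$ at the coarsest active scale and the fact that the convolution kernel $\mathbf{1}_{[0,h_1]}*\cdots*\mathbf{1}_{[0,h_\mu]}$ lives inside one such period. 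The case $\alpha=\infty$ requires no extra work, since then $\mu=N$ remains finite.
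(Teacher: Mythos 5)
Your proposal is correct, and while it rests on the same two pillars as the paper --- the identity $\hat\phi(k)=2^{-\mu}\,\widehat{\partial_{a_1+1}\cdots\partial_{a_\mu+1}\phi}(k)$ followed by a conversion of dyadic differences into derivatives integrated against a nonnegative kernel with $L^1$-norm $2^{-\mu'}$ and $L^\infty$-norm at most twice that, closed off by H\"older and the interpolation $\|\rho\|_{p'}\le\|\rho\|_\infty^{1/p}\|\rho\|_1^{1/p'}$ --- the middle step is executed by a genuinely different route. The paper peels off one operator $w\partial_{i,n}$ at a time via its Lemma \ref{chi}, each time applying Fubini to push a kernel $\chi_n(x,y)$ into a recursively defined function $W(k)$, and then establishes $\|W(k)\|_{L^1}=1$, $\|W(k)\|_{L^\infty}=2$ by a separate induction on the number of dyadic digits using the periodicity of $W$ (Lemmas \ref{period_chi}--\ref{period_W}). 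You instead exploit in one shot the invariance of $(\partial_{a_1+1}\cdots\partial_{a_\mu+1}\phi)\cdot\mathrm{wal}_k$ under the $2^\mu$ bit-flips to restrict to the fundamental domain $E$, where XOR becomes genuine translation, and then invoke the classical integral representation of iterated finite differences; this exhibits the kernel explicitly as the convolution $\mathbf{1}_E*\mathbf{1}_{[0,h_1]}*\cdots*\mathbf{1}_{[0,h_\mu]}$, whose $L^1$- and $L^\infty$-norms drop out of Young's inequality plus the $2^{-a_\mu}$-periodicity of $\mathbf{1}_E$ (and you correctly identify that the residual factor $\mathrm{wal}_{k_>}$ surviving on $E$ when $N>\alpha$ is harmlessly absorbed into $|\sigma|\le\rho$). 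What your approach buys is transparency: the recursive object $W(k)$ and its inductive norm computation are replaced by a closed-form convolution, and the factor $2$ in the sup-norm is seen at a glance to come from the support $[0,\sum_j h_j)\subset[0,2h_\mu)$ sitting inside a single period of $\mathbf{1}_E$. What the paper's route buys is that the kernel identity (Lemma \ref{conti_id}) is stated as an exact formula usable elsewhere --- e.g.\ for the sign corollary on $\hat f(\mathbf{k})$ --- whereas you only keep the inequality $|\sigma|\le\rho$. Your component-wise assembly in $|v|$ variables, justified by differentiation under the integral sign, matches the paper's.
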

%%%%%%%%%%%%%%%%%%%%%%
This inequality follows from the formula for the Walsh coefficients by dyadic differences, 
which are defined in Section \ref{proofofthm2}
 (see the rough sketch of the proof in Section \ref{proofofthm2}).

Here we compare this result with \cite[Theorem 14]{Dick_decay} and its higher dimensional analogue
 in \cite{Dick_garkin}.
Note that our bound includes the case $\alpha=\infty$ for the case $b=2$
and we see that our bound (\ref{yhatineq}) is better under some condition.
Assume that $s=1$. Then for $N_1 \ge \alpha$, 
if we multiply our bound by $(5/3)^{\alpha-2}$
our bound is still smaller than that bound by Dick for any $k_1=\sum_{j=1}^{N_1}2^{a_{1,j}}$ (see \cite[chapter~14]{Dick_Pill}). 
If $N_1 < \alpha$ it is in general not clear which bound is better.
%For example, we assume that $N_1\ge\alpha$ and $s=1$. Then, if we multiply our bound by $(5/3)^{\alpha-2}$,
% our bound is still smaller than that bound by Dick for any $k_1=\sum_{j=1}^{N_1}2^{a_{1,j}}$
% (see \cite[chapter~14]{Dick_Pill}). 

%%%%%%%%%%%%%%%%%%%%%%%%%%%%%%%%%%%%%%%%%%%%
% State an outline of your paper.
%%%%%%%%%%%%%%%%%%%%%%%%%%%%%%%%%%%%%%%%%%%%
In the following, we assume that $k\in\mathbb{N}_0$ has dyadic expansion with 
$k=\sum_{j=1}^{N}2^{a_{j}}$ where $N$ is some integer and $a_1>\cdots>a_{N}$ and set $N=0,\{a_j\}=\phi$ for $k=0$.

The remainder of this paper is organized as follows.
In Section \ref{proofofthm1}, we give the proof of Theorem \ref{yKH}
and we give the rough sketch of the proof of Theorem \ref{yhat} in Section \ref{proofofthm2}.
In Section \ref{proofofhatdisc}
we show the proof of lemmas to complete the proof of Theorem \ref{yKH} and \ref{yhat}.
%in Section \ref{proofofthm2}.
%%%%%%%%%%%%%%%%%%%%%%%%%%%%%%%%%%%%%%%%%%%%%%%
% end: Introduction Introduction Introduction Introduction
%%%%%%%%%%%%%%%%%%%%%%%%%%%%%%%%%%%%%%%%%%%%%%%
\section{ Proof of Theorem \ref{yKH} }
\label{proofofthm1}
\begin{proof}
We assume that $f$ is continuous on $[0,1]^s$ and
$\sum_{\mathbf{k}\in\mathbb{N}_0^s}|\hat{f}(\mathbf{k})|< \infty$. 
In \cite[Lemma 17]{GSY}, we have pointwise absolute convergence
\begin{eqnarray}
\label{hat_eq}
f(\mathbf{x})=\sum_{\mathbf{k}\in\mathbb{N}_0^s}\hat{f}(\mathbf{k})\mathrm{wal}_{\mathbf{k}}(\mathbf{x}).
\end{eqnarray}
Now we have that $f$ is continuous by the assumption of $f$.
We show that $f$ also satisfies the second condition.
If we apply Theorem \ref{yhat} for $\alpha=2$, we have
\begin{eqnarray*}
\sum_{\mathbf{k}\in\mathbb{N}_0^s\backslash \{ \mathbf{0}\}}|\hat{f}(\mathbf{k})|
= \sum_{\phi\neq v\subset S}\sum_{\mathbf{k}_v\in\mathbb{N}^{|v|}}|\hat{f}((\mathbf{k}_v;\mathbf{0}))|
\le \sum_{\phi\neq  v\subset S}\sum_{\mathbf{k}_v\in\mathbb{N}^{|v|}}2^{\frac{|v|}{p}}2^{-\mu_2 '(\mathbf{k}_v)}\| f^{(\min (2,\mathbf{N}_{\mathbf{k}_v}))}\|_p \\
\le \sum_{\phi\neq  v\subset S}\sum_{\mathbf{k}_v\in\mathbb{N}^{|v|}}2^{\frac{|v|}{p}}2^{-\mu_2 '(\mathbf{k}_v)}\max_{\mathbf{n}\in \{ 1,2\} ^{|v|}}\| f^{(\mathbf{n})}\|_p 
\le 2^{\frac{s}{p}}\max_{\mathbf{n}'\in \{ 0,1,2\} ^s}\| f^{(\mathbf{n}')}\|_{L^p}
\sum_{\phi\neq  v\subset S}\sum_{\mathbf{k}_v\in\mathbb{N}^{|v|}}2^{-\mu_2 '(\mathbf{k}_v)}.
\end{eqnarray*}
Note that $L^p$-norm $\| f\|_{L^p}:=(\int_{[0,1)^s} |f(x)|^pdx)^{1/p}$ is different from 
the norm $\| f\|_p$ defined in Section 1.
Thus we have
\begin{eqnarray*}
\sum_{\mathbf{k}\in\mathbb{N}_0^s}|\hat{f}(\mathbf{k})|
&&= |\hat{f}(\mathbf{0})|+\sum_{\mathbf{k}\in\mathbb{N}_0^s\backslash \{ \mathbf{0}\}}|\hat{f}(\mathbf{k})|\\
&&\le 2^{\frac{s}{p}}\max_{\mathbf{n}'\in \{ 0,1,2\} ^s}\| f^{(\mathbf{n}')}\|_{L^p}
\Big(
1+\sum_{\phi\neq  v\subset S}\sum_{\mathbf{k}_v\in\mathbb{N}^{|v|}}2^{-\mu_2 '(\mathbf{k}_v)}
\Big).
\end{eqnarray*}
Since $\max_{\mathbf{n}'\in\{0,1,2\}^s}\| f^{(\mathbf{n}')}\| _{L^p}<\infty$ holds by the assumption on $f$,
we have only to show the last summation is finite. We prove this in the following way:
\begin{eqnarray*}
1+\sum_{\phi\neq  v\subset S}\sum_{\mathbf{k}_v\in\mathbb{N}^{|v|}}2^{-\mu_2 '(\mathbf{k}_v)}
&&=\left( 1+\sum_{k\in\mathbb{N} }2^{-\mu_{2}'(k)}\right)^s\\
&&=\left( 1+\sum_{l\in\mathbb{N}_0}2^{-l-2}+\sum_{l_1,l_2\in\mathbb{N}_0,l_1<l_2}\sum_{k\in\mathbb{N}_0,k<2^{l_1}}2^{-l_1-l_2-4}\right)^s\\
&&=\left( \frac{3}{2}+\sum_{l_2\in\mathbb{N}_0}l_22^{-l_2-4}\right)^s
\le\left( \frac{3}{2}+ 2^{-4}\cdot \sum_{l_2\in\mathbb{N}_0}\left(\frac{3}{4}\right)^{l_2}\right)^s < \infty.
\end{eqnarray*}
Then we can apply the formula (\ref{hat_eq}) to $f$ to get
\begin{eqnarray*}
|\mathrm{Err}(f;\mathcal{P})|
=\left|\int_{[0,1)^s}f(\mathbf{x}) \ d\mathbf{x}-\frac{1}{|\mathcal{P}|}\sum_{\mathbf{x}\in\mathcal{P}}f(\mathbf{x})\right|
=\left|\hat{f}(\mathbf{0})-\frac{1}{|\mathcal{P}|}\sum_{\mathbf{x}\in\mathcal{P}}\sum_{\mathbf{k}\in\mathbb{N}_0^s}\hat{f}(\mathbf{k})\mathrm{wal}_{\mathbf{k}}(\mathbf{x})\right| .
\end{eqnarray*}
Now we introduce the property of Walsh coefficients $\mathrm{wal}_{\mathbf{k}}$.
Let $\mathcal{P}$ be a digital net in $[0,1)^s$ where $|\mathcal{P}|=2^m$. Then we have
(see \cite[Lemma 4.75]{Dick_Pill})
\begin{eqnarray*}
\sum_{\mathbf{x}\in\mathcal{P}}\mathrm{wal}_{\mathbf{k}}(\mathbf{x})=
\left\{
\begin{array}{ll}
2^m & \mathrm{if} \ \mathbf{k}\in \mathcal{P}^\bot ,\\
0   & \mathrm{otherwise}.\\
\end{array}
\right.
\end{eqnarray*}
Using this fact, we have
\begin{eqnarray*}
|\mathrm{Err}(f;\mathcal{P})|
=\left|\hat{f}(\mathbf{0})-\sum_{\mathbf{x}\in\mathcal{P}^\bot}
\hat{f}(\mathbf{k})\right|
\le \sum_{\mathbf{x}\in\mathcal{P}^\bot\backslash \{ \mathbf{0}\}}|\hat{f}(\mathbf{k})|
=\sum_{\phi\neq v\subset S}\sum_{\mathbf{k}_v\in\mathcal{P}_v^\bot}|\hat{f}((\mathbf{k}_v;\mathbf{0}))|.
\end{eqnarray*}
Let $\alpha\in\mathbb{N}\cup\{\infty\}$ with $\alpha\ge 2$ and $1\le p, q, q'\le \infty$ such that $1/q +1/q' = 1$.
Applying Theorem \ref{yhat} to $f$, we have
\begin{eqnarray*}
|\mathrm{Err}(f;\mathcal{P})|
& &\le  
\sum_{v}\sum_{\mathbf{k}_v\in\mathcal{P}_v^\bot}
2^{\frac{|v|}{p}}2^{-\mu_{\alpha}'(\mathbf{k}_v)}\|f^{(\min(\alpha, \mathbf{N}_{\mathbf{k}_v}))}\|_p\\
& &\le 
\sum_{v}\gamma _{v}^{-1}2^{\frac{|v|}{p}}\sup_{\alpha_v\in\{1,\dots ,\alpha \} ^{|v|}}\|f^{(\alpha_v)}\|_{p}
\sum_{\mathbf{k}_v\in\mathcal{P}_v^\bot}
\gamma _{v}2^{-\mu_{\alpha}'(\mathbf{k}_v)}\\
& &\le \| f\|_{\mathcal{B}_{\alpha},\gamma ,p,q'}\times \mathcal{W}_{\alpha,\gamma ,q}(\mathcal{P}).
\end{eqnarray*}
We use H\"{o}lder's inequality in the last inequality.
\qquad \end{proof}
%%%%%%%%%%%%%%%%%%%%%%%%%%%%%%%%%%%%%%%%%%%%
\section{Proof of Theorem 2}
\label{proofofthm2}
%We introduce some useful notation in the following.
%%%%%%%%%%%%%%%%%%%%%%%%%%%%%%%%%%%%%%%%%%%%
\subsection{Some notations}
To calculate bounds on Walsh coefficients, we introduce the dyadic difference $\partial _{i,n}$
and the weight function $\mu_{\mathbf{u}}'(\mathbf{k})$.
%and the function $W(\mathbf{k})$ for $\mathbf{k}\in\mathbb{N}_0^s$.
%%%%%%%%%%%%%%%%%%%%%%%%%%%%%%%%%%%%%%%%%%%%%%%
% begin: Definition, dfn_dyadic_diff
%%%%%%%%%%%%%%%%%%%%%%%%%%%%%%%%%%%%%%%%%%%%%%%
\begin{definition}[dyadic difference]
\label{dfn_dyadic_diff}
Let $s,n,i\in\mathbb{N}$ with $i\leq s$.
For a function $g\colon[0,1)^s\rightarrow \mathbb{R}$,
we define the dyadic difference $\partial _{i,n}(g)$ by
\begin{eqnarray*}
\partial _{i,n} (g)(x_1,\dots ,x_s):=\frac{g(x_1,\dots,x_i\oplus 2^{-n},\dots, x_s )-g(x_1,\dots ,x_i,\dots, x_s)}{2^{-n}}.
\end{eqnarray*}
Here we write $z\oplus 2^{-n}:=z+2^{-n}(-1)^{z_n}$ 
for $z$ having dyadic expansion $z=\sum_{j=1}^{\infty}z_j2^{-j}$, 
where infinitely many digits $z_j$ are $0$.

%Let $\phi\neq S=\{1,\dots,s\}$ and a vector $\mathbf{k}_v\subset \mathbb{N}^{|v|}$.
Let a vector $\mathbf{k}=(k_1,\dots,k_s)\in\mathbb{N}_0^s$, 
$\mathbf{u}=(u_1,\dots,u_s)\in (\mathbb{N}_0\cup\{ \infty\})^s$
with $k_i=\sum_{j=1}^{N_i}2^{a_{i,j}}$.
Let $\phi\neq v\subset S=\{1,\dots,s\}$ and a vector $\mathbf{k}_v\in \mathbb{N}^{|v|}$
with $\mathbf{k}=(\mathbf{k}_v;\mathbf{0})$.
The symbol $(\mathbf{k}_v;\mathbf{0})$ is the same symbol as in Section 1.
%the composition of the dyadic differences 
We use the symbol
$\mathbf{d}_{\mathbf{k}_{\mathbf{u}}}$
instead of the composition map of 
$\{\partial _{i,a_{i,j}+1}\}_{i\in v,1\le j\le \min(N_i,u_i)}$.
%If $u_i=\alpha\in\mathbb{N}\cup\{\infty\}$ for all $i$,
%we write $\mathbf{d}_{\mathbf{k}_\alpha}$ instead of $\mathbf{d}_{\mathbf{k}_{\mathbf{u}}}$.
\end{definition}
%%%%%%%%%%%%%%%%%%%%%%%%%%%%%%%%%%%%%%%%%%%%%%%
% end: Definition, dfn_dyadic_diff
%%%%%%%%%%%%%%%%%%%%%%%%%%%%%%%%%%%%%%%%%%%%%%%
\begin{remark}{\rm
Since any two dyadic differences commute, $\mathbf{d}_{\mathbf{k}_{\mathbf{u}}}$ is defined independent of the order of a composition.
}\end{remark}

\begin{definition}[the new weight function $\mu ' _{\mathbf{u}}(\mathbf{k})$]
%Let $t\in\mathbb{N}$ and $(u_1,\dots,u_t)\in(\mathbb{N}_0\cup\{\infty\})^t.$
%We take a vector $\mathbf{k}=(k_1,\dots,k_t)\in\mathbb{N}_0^t$ with $k_i=\sum_{j=1}^{N_i}2^{a_{i,j}}$.
We use the same symbols as in Definition \ref{dfn_dyadic_diff}.
The weight function $\mu ' _{\mathbf{u}}(\mathbf{k})$ of $\mathbf{k}$ is defined by
\begin{eqnarray*}
\mu ' _{\mathbf{u}}(\mathbf{k}):=\sum_{i\in v,1\le j\leq \min\{ N_i,u_i\}}(a_{i,j}+2),
\end{eqnarray*}
and we define $\mu ' _{\mathbf{u}}(\mathbf{0})=0$.

When $u_i=\alpha\in\mathbb{N}\cup\{\infty\}$ for every $i$,
$\mu ' _{\mathbf{u}}((\mathbf{k}_v;\mathbf{0}))$ equals $\mu _{\alpha}'(\mathbf{k}_v)$.
\end{definition}

%%%%%%%%%%%%%%%%%%%%%%%%%%%%%%%%%%%%%%%%%%%%%%%
% begin: Definition, W_def
%%%%%%%%%%%%%%%%%%%%%%%%%%%%%%%%%%%%%%%%%%%%%%%
We define the important two functions $\chi _n(x,y)$ and $W(\mathbf{k})$.
\begin{definition}
\label{W_def}
We define the function $\chi _n(x,y)\colon[0,1)^2\rightarrow\mathbb{R}$ by
\begin{eqnarray*}
\chi _n(x,y):=
\left\{
\begin{array}{ll}
2^n & \mathrm{if} \ y\in [ \min (x,x\oplus 2^{-n}),\max (x,x\oplus 2^{-n})],\\
0 & \mathrm{otherwise}.\\
\end{array}
\right.
\end{eqnarray*}
Recall that $x\oplus 2^{-n}$ is defined as in Definition \ref{dfn_dyadic_diff}.

Using this, we define the $1$-dimensional versions $W(k)$ inductively by
\begin{eqnarray*}
W(0)&&:=1,\\
W(2^{n_1})(y)&&:=\int_0 ^1\chi _{n_1+1}(x,y) \ dx,\\
W(2^{n_1}+\cdots+2^{n_{N+1}})(y)&&:=\int_0 ^1\chi _{n_{N+1}+1}(x,y)W(2^{n_1}+\cdots+2^{n_N})(x) \ dx,
\end{eqnarray*}
where $n_1>\cdots>n_{N+1}$.

Then, for a vector $(k_1,\dots,k_s)\in\mathbb{N}_0^s$,
the $s$-dimensional versions $W(\mathbf{k})\colon[0,1)^s\rightarrow\mathbb{R}$ 
is defined by
\begin{eqnarray*}
W(\mathbf{k}):=\prod_{i=1}^s W(k_i).
\end{eqnarray*}
\end{definition}
%%%%%%%%%%%%%%%%%%%%%%%%%%%%%%%%%%%%%%%%%%%%%%%
% end: Definition, W_def
%%%%%%%%%%%%%%%%%%%%%%%%%%%%%%%%%%%%%%%%%%%%%%%
\begin{remark}{\rm
By definition, $W(k)$ is continuous on $[0,1)$ for any $k\in \mathbb{N}_0$.
}\end{remark}

The following important property of these functions is proven in Section \ref{section_proof_W}.
%%%%%%%%%%%%%%%%%%%%%%%%%%%%%%%%%%%%%%%%%%%%%%%
% begin: Lemma, prop_{L^1}_{L^1}
%%%%%%%%%%%%%%%%%%%%%%%%%%%%%%%%%%%%%%%%%%%%%%%
\begin{lemma}
\label{prop_{L^1}_{L^1}}
Let a vector $k\in\mathbb{N}$.
We have that $W(k)\geq 0$ on $[0,1)$ and,
for $1\leq p\leq \infty$, we have
\begin{eqnarray*}
\| W(k)\| _{L^p}\le 2^{1-\frac{1}{p}}.
\end{eqnarray*}
\end{lemma}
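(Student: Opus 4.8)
The plan is to prove both claims — nonnegativity and the $L^p$ bound — simultaneously by induction on the number $N$ of nonzero dyadic digits of $k$. First I would record the two elementary properties of the kernel $\chi_n$ that drive everything: for any fixed $x$, $\chi_n(x,\cdot)$ is the function $2^n$ times the indicator of the dyadic interval of length $2^{-n}$ containing $x$, so it is nonnegative, and $\int_0^1\chi_n(x,y)\,dy=1$ for every $x$; symmetrically, for any fixed $y$, $\int_0^1\chi_n(x,y)\,dx=1$ as well (since $y\in[\min(x,x\oplus2^{-n}),\max(x,x\oplus2^{-n})]$ is a symmetric relation between $x$ and $y$, and the set of such $x$ is again a dyadic interval of length $2^{-n}$). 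In other words $\chi_n$ is the kernel of a Markov (doubly stochastic) averaging operator on $[0,1)$. Nonnegativity of $W(k)$ is then immediate from the inductive definition, since each step integrates a nonnegative function against the nonnegative kernel $\chi_{n_{j}+1}$.

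For the norm bound, the base case is $W(2^{n_1})(y)=\int_0^1\chi_{n_1+1}(x,y)\,dx=1$, so $\|W(2^{n_1})\|_{L^p}=1\le 2^{1-1/p}$; here I would also note that for $k=2^{n_1}$ we in fact have $W(k)\le 1$ pointwise, and more generally after any number of the later averaging steps the sup norm never increases. For the inductive step, write $W(k)(y)=\int_0^1\chi_{n_{N+1}+1}(x,y)\,W(k')(x)\,dx$ where $k'=2^{n_1}+\cdots+2^{n_N}$. Since $\chi_{n_{N+1}+1}(\cdot,y)$ is $2^{n_{N+1}+1}$ times the indicator of an interval of length $2^{-(n_{N+1}+1)}$, and $\int_0^1\chi_{n_{N+1}+1}(x,y)\,dx=1$, the value $W(k)(y)$ is an \emph{average} of $W(k')$ over that interval; by Jensen's inequality applied to $t\mapsto t^p$ (for $p<\infty$), $W(k)(y)^p\le\int_0^1\chi_{n_{N+1}+1}(x,y)\,W(k')(x)^p\,dx$. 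Integrating over $y$ and using $\int_0^1\chi_{n_{N+1}+1}(x,y)\,dy=1$ gives $\|W(k)\|_{L^p}^p\le\|W(k')\|_{L^p}^p$, so the $L^p$ norm is nonincreasing along the recursion; combined with the base case this yields $\|W(k)\|_{L^p}\le 1\le 2^{1-1/p}$ for all $k$ with at least one digit. The $p=\infty$ case is the same averaging argument without Jensen (an average is bounded by a sup), and for completeness one can observe the bound $2^{1-1/p}$ is not even tight here; I would keep the weaker stated constant since that is what is used downstream.

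The only subtlety, and the part I would write out carefully rather than wave at, is the claim that $\chi_n$ is doubly stochastic — i.e. that $\int_0^1\chi_n(x,y)\,dx=1$ for every fixed $y$ — because the definition singles out $x$ as the "center" variable via $x\oplus 2^{-n}$. The point is that $z\oplus 2^{-n}$ flips the $n$-th binary digit of $z$, so $\{x, x\oplus2^{-n}\}$ is always a pair of points sharing the first $n-1$ digits and differing only in the $n$-th; hence the relation "$y$ lies between $x$ and $x\oplus2^{-n}$" holds exactly when $x$ and $y$ agree in their first $n-1$ dyadic digits (after normalizing the at-most-countably-many endpoint ambiguities, which have measure zero), a symmetric condition defining a dyadic interval of length $2^{-(n-1)}\cdot\tfrac12$... to be precise, the set of admissible $x$ for fixed $y$ is the length-$2^{-n}$ dyadic interval $[\lfloor 2^{n-1}y\rfloor 2^{-(n-1)},\ \lfloor 2^{n-1}y\rfloor 2^{-(n-1)}+2^{-n})$ reflected appropriately, and in all cases has Lebesgue measure $2^{-n}$, so the integral against the height $2^n$ is $1$. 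Once this symmetry is in hand, the induction above closes with no further work.
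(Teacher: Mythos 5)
Your argument has a fatal gap: the kernel $\chi_n$ is \emph{not} doubly stochastic, and the claim you flag as ``the only subtlety'' --- that $\int_0^1\chi_n(x,y)\,dx=1$ for every fixed $y$ --- is false. The row sums are fine: for fixed $x$ the set $\{y:\chi_n(x,y)=2^n\}$ is an interval of length $2^{-n}$, so $\int_0^1\chi_n(x,y)\,dy=1$. But that interval is $[u+v,\,u+v+2^{-n}]$, where $u$ collects the first $n-1$ dyadic digits of $x$ and $v<2^{-n}$ is its tail beyond digit $n$; it slides continuously with $v$ and is in general not a dyadic interval, so the betweenness relation is not symmetric in $x$ and $y$. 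Computing the column sum directly (this is the paper's Lemma \ref{chi_y}): for $y\in[2^{-n}c,2^{-n}(c+1))$ with $c$ even, the set $\{x:\chi_n(x,y)=2^n\}$ is $[2^{-n}c,y]\cup[2^{-n}(c+1),y+2^{-n}]$, of measure $2(y-2^{-n}c)$, so $\int_0^1\chi_n(x,y)\,dx=2^{n+1}(y-2^{-n}c)$, which ranges over $[0,2)$ rather than being identically $1$. Consequently your base case $W(2^{n_1})\equiv 1$ is wrong --- $W(2^{n_1})$ is a tent function with $\|W(2^{n_1})\|_{L^\infty}=2$ --- and the conclusion $\|W(k)\|_{L^p}\le 1$ is false for every $p>1$ (for instance $\|W(2^{n_1})\|_{L^2}=2/\sqrt{3}>1$). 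The Jensen/averaging induction cannot be repaired as stated, because each step $W(k)(y)=\int_0^1\chi(x,y)W(k')(x)\,dx$ averages in the wrong variable: the total mass of $\chi(\cdot,y)$ in $x$ is not $1$.

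What survives of your plan is the $L^1$ part: by Fubini and the row sums one does get $\|W(k)\|_{L^1}=\|W(k')\|_{L^1}=\cdots=1$, exactly as in the paper. The paper then proves separately, by induction using the periodicity of $W(k)$ (Lemma \ref{period_W}) together with the explicit column description of $\chi_n$ (Lemma \ref{chi_y}), that $\|W(k)\|_{L^\infty}=2$, and concludes by the interpolation $\|W(k)\|_{L^p}\le\|W(k)\|_{L^1}^{1/p}\,\|W(k)\|_{L^\infty}^{1-1/p}=2^{1-1/p}$. That interpolation step is the ingredient your write-up is missing, and it also explains the constant you dismiss as ``not even tight'': it reflects the exact value $\|W(k)\|_{L^\infty}=2$.
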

%%%%%%%%%%%%%%%%%%%%%%%%%%%%%%%%%%%%%%%%%%%%%%%
% end: Lemma, prop_{L^1}_{L^1}
%%%%%%%%%%%%%%%%%%%%%%%%%%%%%%%%%%%%%%%%%%%%%%%
%%%%%%%%%%%%%%%%%%%%%%%%%%%%%%%%%%%%%%%%%%%%
\subsection{Proof of Theorem \ref{yhat}}
In this subsection, we show bounds on the Walsh coefficients $\hat{f}(\mathbf{k})$
 admitting the following Lemmas \ref{disc_id} and \ref{conti_id},
which we prove in the next section.

We denote the symbols used in the statements.
\begin{definition}
We use the same symbols as in Definition \ref{dfn_dyadic_diff}.
For $\mathbf{k}$ and $\mathbf{u}$, we define
\begin{eqnarray*}
k_{i,>}^{u_i}:=
\left\{
\begin{array}{ll}
\sum_{j>u_i}2^{a_{i,j}}& \quad if \ i\in v ,\\
0&\quad if \ i\in S\backslash v,\\
\end{array}
\right.
k_{i,\le}^{u_i}:=
\left\{
\begin{array}{ll}
\sum_{j\le u_i}2^{a_{i,j}}& \quad if \ i\in v ,\\
0&\quad if \ i\in S\backslash v,\\
\end{array}
\right.
\end{eqnarray*}
and
\begin{eqnarray*}
&&\mathbf{k}^{\mathbf{u}}_{>}:=(k_{i,>}^{u_i})_{i\in S},\qquad
\mathbf{k}^{\mathbf{u}}_{\leq}:=(k_{i,\le}^{u_i})_{i\in S},\\
&&{\mathbf{k}_v}^{\mathbf{u}}_{>}:=(k_{i,>}^{u_i})_{i\in v},\qquad
{\mathbf{k}_v}^{\mathbf{u}}_{\leq}:=(k_{i,\le}^{u_i})_{i\in v},\\
&&\min(\mathbf{u},\mathbf{N}_{\mathbf{k}}):=(\min(u_i,N_i))_{i\in S},\qquad
\min(\mathbf{u},\mathbf{N}_{\mathbf{k}_v}):=(\min(u_i,N_i))_{i\in v},\\
&&|\min(\mathbf{u},\mathbf{N}_{\mathbf{k}})|_{l^1}:=\sum_{i\in S}\min(u_i,N_i) .
\end{eqnarray*}
\end{definition}

When we analyze Walsh coefficients, it is suitable to use dyadic differences. In fact, 
the $\mathbf{k}$-th Walsh coefficient $\hat{f}(\mathbf{k})$ can be represented by
$\widehat{\partial _{i,n}f}(\mathbf{k})$ as follows:
\begin{eqnarray}
\label{disc_id_ex}
\hat{f}(\mathbf{k})=(-1)\cdot 2^{-a_{i,j}-2}\widehat{\partial _{i,a_{i,j}+1}(f)}(\mathbf{k}),
\end{eqnarray}
where $\mathbf{k}=(k_1\dots,k_s)$ with $k_i=\sum_{j=1}^{N_i}2^{a_{i,j}}$.
Applying the above formula (\ref{disc_id_ex}) repeatedly, we have the following formula:
\begin{lemma}
\label{disc_id}
Let $f\in L^1([0,1)^s)$, $\mathbf{k}\in\mathbb{N}_0^s$
and $\mathbf{u}\in(\mathbb{N}_0\cup\{\infty\})^s$.\\
Then we have
$\mathbf{d}_{\mathbf{k}_{\mathbf{u}}}f\in L^1([0,1)^s)$
and we have
\begin{eqnarray}
\label{disc_id_ex2}
\hat{f}(\mathbf{k})=(-1)^{|\min( \mathbf{u},\mathbf{N}_{\mathbf{k}} )|_{l^1}}
2^{-\mu '_{\mathbf{u}} (\mathbf{k})}\widehat{\mathbf{d}_{\mathbf{k}_{\mathbf{u}}}f}(\mathbf{k}).
\end{eqnarray}
\end{lemma}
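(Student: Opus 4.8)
The plan is to prove Lemma \ref{disc_id} by induction, building up the composition of dyadic differences one factor at a time, with the single-difference identity (\ref{disc_id_ex}) as the base case. First I would establish (\ref{disc_id_ex}) itself: fix $\mathbf{k}$ with $k_i=\sum_{j=1}^{N_i}2^{a_{i,j}}$, pick one index $j$ with $a:=a_{i,j}$, and compute $\widehat{\partial_{i,a+1}(f)}(\mathbf{k})$ directly from the definition of the Walsh coefficient. The key is the change of variables $x_i\mapsto x_i\oplus 2^{-(a+1)}$ on $[0,1)$, which is a measure-preserving involution, combined with the behaviour of $\mathrm{wal}_{\mathbf{k}}$ under this map: since $2^{a}$ appears in the dyadic expansion of $k_i$, flipping the $(a+1)$-st binary digit of $x_i$ changes the sign of $\mathrm{wal}_{\mathbf{k}}$ exactly. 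Writing out $\partial_{i,a+1}(f) = 2^{a+1}\bigl(f(\ldots,x_i\oplus 2^{-(a+1)},\ldots) - f(\ldots,x_i,\ldots)\bigr)$ and integrating against $\mathrm{wal}_{\mathbf{k}}$, the shifted term contributes $-\hat f(\mathbf{k})$ after the substitution and the unshifted term contributes $+\hat f(\mathbf{k})$, so $\widehat{\partial_{i,a+1}(f)}(\mathbf{k}) = 2^{a+1}\cdot(-2)\hat f(\mathbf{k}) = -2^{a+2}\hat f(\mathbf{k})$, which rearranges to (\ref{disc_id_ex}). I should also note $\partial_{i,a+1}(f)\in L^1$ whenever $f\in L^1$, since it is a finite linear combination of $L^1$ functions (using again that the dyadic shift preserves the measure).

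Next I would set up the induction. The composition $\mathbf{d}_{\mathbf{k}_{\mathbf{u}}}$ is, by definition, the composite of the operators $\partial_{i,a_{i,j}+1}$ over all pairs $(i,j)$ with $i\in v$ and $1\le j\le \min(N_i,u_i)$; there are exactly $|\min(\mathbf{u},\mathbf{N}_{\mathbf{k}})|_{l^1}$ such pairs. Enumerate them in any order and apply (\ref{disc_id_ex}) successively: each application strips off one operator, multiplies by $(-1)$, multiplies by $2^{-a_{i,j}-2}$, and — crucially — leaves $\mathbf{k}$ unchanged, so the hypothesis "$2^{a}$ appears in the dyadic expansion of $k_i$" remains valid at every stage, allowing the next application. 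After exhausting all pairs, the accumulated sign is $(-1)^{|\min(\mathbf{u},\mathbf{N}_{\mathbf{k}})|_{l^1}}$ and the accumulated power of two is $2^{-\sum (a_{i,j}+2)} = 2^{-\mu'_{\mathbf{u}}(\mathbf{k})}$ by the definition of the new weight function, giving (\ref{disc_id_ex2}). The $L^1$ membership of $\mathbf{d}_{\mathbf{k}_{\mathbf{u}}}f$ follows by applying the base-case $L^1$ remark once per operator, and the order-independence noted in the Remark after Definition \ref{dfn_dyadic_diff} guarantees the result does not depend on the enumeration chosen.

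The main obstacle is the careful bookkeeping in the base case: one must verify that the dyadic shift $x_i\mapsto x_i\oplus 2^{-(a+1)}$ genuinely flips the sign of $\mathrm{wal}_{\mathbf{k}}(\mathbf{x})$ and does so cleanly despite the convention that $x_i$ has infinitely many zero digits — i.e., one must check that carrying in the addition $z+2^{-n}(-1)^{z_n}$ does not disturb digits beyond position $n$, which is exactly why the operation is defined as $z\oplus 2^{-n}=z+2^{-n}(-1)^{z_n}$ rather than ordinary addition. The subtlety is entirely at dyadic rationals (a measure-zero set), so it does not affect the integral, but it should be acknowledged. Beyond this, the induction is essentially routine, and the fact that $\mathbf{k}$ is fixed throughout makes the repeated application of (\ref{disc_id_ex}) immediate.
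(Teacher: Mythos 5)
Your proposal is correct and follows essentially the same route as the paper: the single-difference identity (\ref{disc_id_ex}) is established via the measure-preserving digit-flip $x_i\mapsto x_i\oplus 2^{-(a+1)}$ together with the sign change of $\mathrm{wal}_{\mathbf{k}}$ (the paper carries this out by splitting $[0,1)$ into dyadic subintervals and using $\mathrm{wal}_{k}(x\oplus y)=\mathrm{wal}_k(x)\,\mathrm{wal}_k(y)$), and the full identity and the $L^1$ membership then follow by iterating over the $|\min(\mathbf{u},\mathbf{N}_{\mathbf{k}})|_{l^1}$ operators exactly as you describe. No gaps.
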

%%%%%%%%%%%%%%%%%%%%%%%%%%%%%%%%%%%%%%%%%%%%%%%
% end: Theorem, disc_id
%%%%%%%%%%%%%%%%%%%%%%%%%%%%%%%%%%%%%%%%%%%%%%%
This formula (\ref{disc_id_ex2}) means that dyadic differences
 connect the $\mathbf{k}$-th Walsh coefficient $\hat{f}(\mathbf{k})$
 to the weight function $\mu'_{\mathbf{u}}(\mathbf{k})$ for $f\in L^1([0,1)^s)$. 
%%%%%%%%%%%%%%%%%%%%%%%%%%%%%%%%%%%

Dyadic differences $\partial _{i,n}f$ are similar to derivatives $\frac{\partial f}{\partial x_i }$.
In the following formula, we can replace $\partial _{i,n}f$ with $\frac{\partial f}{\partial x_i }$ using a slight modification.
%%%%%%%%%%%%%%%%%%%%%%%%%%%%%%%%%%%%%%%%%%%%%%%
% begin: Theorem, conti_id
%%%%%%%%%%%%%%%%%%%%%%%%%%%%%%%%%%%%%%%%%%%%%%%

\begin{lemma}
\label{conti_id}
Let $\mathbf{u}=(u_1,\dots,u_s) \in(\mathbb{N}_0\cup\{\infty\})^s.$
We assume that a function $f$ satisfies that its mixed partial
derivatives up to order $u_i$ in each variable $x_i$ are continuous on $[0,1]^s.$
For any vector $\mathbf{k}\in\mathbb{N}_0^s,$ we have
\begin{eqnarray*}
\hat{f}(\mathbf{k})& &=(-1)^{|\min(\mathbf{u},\mathbf{N}_{\mathbf{k}})|_{l^1}}
2^{-\mu ' _{\mathbf{u}}(\mathbf{k})}
\int_{[0,1)^s}f^{(\min(\mathbf{u},\mathbf{N}_{\mathbf{k}}))}(\mathbf{x}) \cdot
W(\mathbf{k}_{\leq}^{\mathbf{u}})(\mathbf{x})
\cdot \mathrm{wal}_{\mathbf{k}^\mathbf{u}_{>}}(\mathbf{x})\ d\mathbf{x}.
\end{eqnarray*}
\end{lemma}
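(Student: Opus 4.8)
The plan is to start from the discrete identity in Lemma \ref{disc_id}, which already tells us that
\[
\hat{f}(\mathbf{k})=(-1)^{|\min(\mathbf{u},\mathbf{N}_{\mathbf{k}})|_{l^1}}2^{-\mu'_{\mathbf{u}}(\mathbf{k})}\,\widehat{\mathbf{d}_{\mathbf{k}_{\mathbf{u}}}f}(\mathbf{k}),
\]
so the whole task reduces to rewriting the single Walsh coefficient $\widehat{\mathbf{d}_{\mathbf{k}_{\mathbf{u}}}f}(\mathbf{k})$ as the integral $\int f^{(\min(\mathbf{u},\mathbf{N}_{\mathbf{k}}))}(\mathbf{x})\,W(\mathbf{k}^{\mathbf{u}}_{\le})(\mathbf{x})\,\mathrm{wal}_{\mathbf{k}^{\mathbf{u}}_{>}}(\mathbf{x})\,d\mathbf{x}$. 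Because both the iterated dyadic difference $\mathbf{d}_{\mathbf{k}_{\mathbf{u}}}$ and the function $W(\mathbf{k}^{\mathbf{u}}_{\le})=\prod_i W(k^{u_i}_{i,\le})$ factor as tensor products over the coordinates $i\in v$, and the Walsh function $\mathrm{wal}_{\mathbf{k}}=\mathrm{wal}_{\mathbf{k}^{\mathbf{u}}_{\le}}\cdot\mathrm{wal}_{\mathbf{k}^{\mathbf{u}}_{>}}$ also factors, it suffices to prove the corresponding one-dimensional statement coordinate by coordinate and then multiply; so I would first reduce to $s=1$, a single index $i$, a single $k=\sum_{j=1}^N 2^{a_j}$, and a single truncation level $u\in\mathbb{N}_0\cup\{\infty\}$, with $r:=\min(u,N)$.

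In the one-dimensional case the heart of the matter is a single identity: for a continuously differentiable $g$ and any $n$,
\[
\int_0^1 \partial_{1,n}(g)(x)\,h(x)\,dx \;=\; \int_0^1 g'(y)\Big(\int_0^1 \chi_n(x,y)\,h(x)\,dx\Big)dy,
\]
which is just the fundamental theorem of calculus applied to $g(x\oplus 2^{-n})-g(x)=\int g'$, times $2^n$, together with Fubini's theorem; note that $\chi_n(x,\cdot)$ is exactly $2^n$ times the indicator of the interval between $x$ and $x\oplus 2^{-n}$, so this records the interval over which the difference quotient integrates $g'$. Applying this identity $r$ times, peeling off the dyadic differences $\partial_{1,a_1+1},\dots,\partial_{1,a_r+1}$ in turn against the test function $h=\mathrm{wal}_k$, each application converts one $\partial_{1,a_j+1}$ into an extra derivative, picks up one factor $\chi_{a_j+1}$, and after collapsing all of them one recognizes the nested integrals of $\chi$'s as precisely the inductive definition of $W\!\big(2^{a_1}+\cdots+2^{a_r}\big)=W(k^u_{\le})$. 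What remains of $\mathrm{wal}_k$ after the dust settles is $\mathrm{wal}_{k^u_{>}}$: here I would use that $\mathrm{wal}_{2^{a}}(x\oplus 2^{-(a+1)})=-\,\mathrm{wal}_{2^a}(x)$ while $\mathrm{wal}_{2^{b}}(x\oplus 2^{-(a+1)})=\mathrm{wal}_{2^b}(x)$ for $b\neq a$, so that each differencing step at level $a_j+1$ contributes the sign $(-1)$ already accounted for in $(-1)^{|\min(\mathbf{u},\mathbf{N}_{\mathbf{k}})|_{l^1}}=(-1)^r$ in one dimension and strips the digit $2^{a_j}$ off the Walsh index; the surviving low-order digits reassemble into $\mathrm{wal}_{k^u_>}$.

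The main obstacle I anticipate is bookkeeping rather than analysis: keeping track, through $r$ iterations, of exactly which digits of $k$ have been consumed, which signs have accumulated, and that the nested $\chi$-integrals match the recursion defining $W$ verbatim (the $+1$ shift in $\chi_{n_{N+1}+1}$ versus the exponents $a_j$ must line up with the normalization $\partial_{i,a_{i,j}+1}$ in $\mathbf{d}_{\mathbf{k}_{\mathbf{u}}}$). A secondary technical point is justifying the interchange of integration and differentiation when $u=\infty$ and $N<\infty$ (so $r=N$): here $r$ is finite, the derivatives up to order $N$ are continuous by hypothesis, and Lemma \ref{disc_id} already guarantees $\mathbf{d}_{\mathbf{k}_{\mathbf{u}}}f\in L^1$, so Fubini applies at each of the finitely many steps; the case $N=\infty$ does not arise since $k\in\mathbb{N}_0$ has a finite dyadic expansion. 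Once the one-dimensional identity is established, taking the product over $i\in v$ (with the factor $(-1)^{|\min(\mathbf{u},\mathbf{N}_{\mathbf{k}})|_{l^1}}$ being the product of the one-dimensional signs and $2^{-\mu'_{\mathbf{u}}(\mathbf{k})}$ the product of the one-dimensional normalizing powers) yields the claimed formula.
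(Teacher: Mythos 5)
Your overall architecture coincides with the paper's: reduce to $s=1$, convert each dyadic difference into a derivative via the fundamental theorem of calculus with the kernel $\chi_n$, apply Fubini at each of the finitely many steps, and recognize the nested $\chi$-integrals as the recursion defining $W$. However, your central one-dimensional identity is false as stated, and the error is exactly the point the paper's Lemmas \ref{wd_dw} and \ref{chi} exist to handle. Since $\chi_n(x,\cdot)$ is $2^n$ times the indicator of $[\min(x,x\oplus 2^{-n}),\max(x,x\oplus 2^{-n})]$, the fundamental theorem of calculus gives $\int_0^1 g'(y)\chi_n(x,y)\,dy=2^n\bigl(g(\max(x,x\oplus 2^{-n}))-g(\min(x,x\oplus 2^{-n}))\bigr)$, which equals $+\partial_{1,n}(g)(x)$ when the $n$th dyadic digit of $x$ is $0$ but $-\partial_{1,n}(g)(x)$ when it is $1$. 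The correct identity is $\mathrm{wal}_{2^{n-1}}(x)\cdot\partial_{1,n}(g)(x)=\int_0^1 g'(y)\chi_n(x,y)\,dy$, i.e.\ Lemma \ref{chi} for the weighted operator $w\partial_{i,n}$, not for $\partial_{i,n}$ itself. A concrete counterexample to your version: take $n=1$, $g(x)=x$, $h\equiv 1$; the left-hand side is $\int_0^1(-1)^{x_1}\,dx=0$ while the right-hand side is $\int_0^1\int_0^1\chi_1(x,y)\,dx\,dy=1$.

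This is not cosmetic. With the factor $\mathrm{wal}_{2^{a_j}}(x)=(-1)^{x_{a_j+1}}$ restored, it multiplies the digit $\mathrm{wal}_{2^{a_j}}$ sitting inside the test function $\mathrm{wal}_k$ and squares to $1$ --- that is what strips the digit; the surviving Walsh factor then depends only on digits of resolution at most $a_j$, hence is constant on the support of $\chi_{a_j+1}(\cdot,y)$ (a dyadic interval of length $2^{-a_j}$ containing $y$, by Lemma \ref{period_chi}), so it can be pulled out at the point $y$ and the $W$-recursion emerges. Without the factor, $\mathrm{wal}_{2^{a_j}}(x)$ oscillates at exactly the scale of that support and the nested integrals do not collapse to $W(\mathbf{k}^{\mathbf{u}}_{\le})$. (You must also record, as the paper does, that differentiation commutes with the remaining finer-scale weighted differences on the relevant dyadic intervals before each application of the identity.) Relatedly, your sign bookkeeping attributes the signs to $\mathrm{wal}_{2^{a}}(x\oplus 2^{-(a+1)})=-\mathrm{wal}_{2^{a}}(x)$; that is the mechanism that produces the factor $(-1)^{|\min(\mathbf{u},\mathbf{N}_{\mathbf{k}})|_{l^1}}$ already consumed by Lemma \ref{disc_id}, so introducing one further sign per step in the derivative conversion would double count. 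Done correctly, the passage from $\widehat{\mathbf{d}_{\mathbf{k}_{\mathbf{u}}}f}(\mathbf{k})$ to $\int f^{(\min(\mathbf{u},\mathbf{N}_{\mathbf{k}}))}\,W(\mathbf{k}^{\mathbf{u}}_{\le})\,\mathrm{wal}_{\mathbf{k}^{\mathbf{u}}_{>}}$ is sign-free. The remainder of your plan (the $L^1$ justification of Fubini, the finiteness of the number of steps, the coordinatewise reduction) matches the paper and is sound.
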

Then we can get the following bound on $|\hat{f}(\mathbf{k})|$:
\begin{lemma}
\label{conti_ineq}
We assume the same assumptions as in Lemma \ref{conti_id}.
Let $\phi\neq v\subset \{ 1,\dots, s\}=S$ and $\mathbf{k}_v\in\mathbb{N}^{|v|}$.
Then we have
\begin{eqnarray*}
|\hat{f}((\mathbf{k}_v;\mathbf{0}))|
\leq 2^{\frac{|v|}{p}}\cdot 2^{-\mu ' _{\mathbf{u}}((\mathbf{k}_v;\mathbf{0}))}
\cdot \| f^{(\min(\mathbf{u},\mathbf{N}_{\mathbf{k}_v}))}\| _p <\infty,
\end{eqnarray*} 
where $1\le p\le \infty$ and $\| \cdot \|_p$ is the norm defined in Theorem \ref{yKH}.
\end{lemma}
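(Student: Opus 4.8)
The plan is to feed the exact identity of Lemma~\ref{conti_id} into H\"{o}lder's inequality, using the nonnegativity and the $L^{p'}$-estimate for $W$ from Lemma~\ref{prop_{L^1}_{L^1}}. Write $\mathbf{k}=(\mathbf{k}_v;\mathbf{0})$. First I would record a bookkeeping observation about which variables the objects in that identity depend on: since $k_i=0$ for $i\in S\backslash v$, we have $N_i=0$ and hence $k_{i,\le}^{u_i}=k_{i,>}^{u_i}=\min(u_i,N_i)=0$ for those $i$. Consequently $W(\mathbf{k}^{\mathbf{u}}_{\le})(\mathbf{x})=\prod_{i\in v}W(k_{i,\le}^{u_i})(x_i)$ and $\mathrm{wal}_{\mathbf{k}^{\mathbf{u}}_{>}}(\mathbf{x})=\prod_{i\in v}\mathrm{wal}_{k_{i,>}^{u_i}}(x_i)$ depend only on the coordinates $\mathbf{x}_v$, while $f^{(\min(\mathbf{u},\mathbf{N}_{\mathbf{k}}))}$ differentiates $f$ only in the variables indexed by $v$, to orders $\min(u_i,N_i)\le u_i$; by the smoothness hypothesis this derivative is continuous on the compact set $[0,1]^s$, hence bounded, which already gives $\|f^{(\min(\mathbf{u},\mathbf{N}_{\mathbf{k}_v}))}\|_p<\infty$.

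Next I would take absolute values in the formula of Lemma~\ref{conti_id}. The sign $(-1)^{|\min(\mathbf{u},\mathbf{N}_{\mathbf{k}})|_{l^1}}$ and the Walsh factor have modulus $1$, and $W\ge 0$ by Lemma~\ref{prop_{L^1}_{L^1}}; since the integrand is bounded, Fubini's theorem allows the integration over $\mathbf{x}_{S\backslash v}$ to be carried out first, which produces exactly the inner integral appearing in the definition of $\|\cdot\|_p$, so that
\[
|\hat{f}((\mathbf{k}_v;\mathbf{0}))|\le 2^{-\mu ' _{\mathbf{u}}((\mathbf{k}_v;\mathbf{0}))}\int_{[0,1)^{|v|}}\left|\int_{[0,1)^{|S\backslash v|}}f^{(\min(\mathbf{u},\mathbf{N}_{\mathbf{k}_v}))}(\mathbf{x})\,d\mathbf{x}_{S\backslash v}\right|\,\prod_{i\in v}W(k_{i,\le}^{u_i})(x_i)\,d\mathbf{x}_v.
\]
Then I would apply H\"{o}lder's inequality on $[0,1)^{|v|}$ with exponents $p$ and $p'$, $1/p+1/p'=1$ (with the usual conventions when $p\in\{1,\infty\}$): the first factor is $\|f^{(\min(\mathbf{u},\mathbf{N}_{\mathbf{k}_v}))}\|_p$, and the second is $\|\prod_{i\in v}W(k_{i,\le}^{u_i})\|_{L^{p'}([0,1)^{|v|})}=\prod_{i\in v}\|W(k_{i,\le}^{u_i})\|_{L^{p'}([0,1))}$. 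Each factor of this product is $\le 2^{1-1/p'}=2^{1/p}$ by Lemma~\ref{prop_{L^1}_{L^1}} when $k_{i,\le}^{u_i}\ge 1$, and equals $W(0)=1\le 2^{1/p}$ when $k_{i,\le}^{u_i}=0$ (that is, $u_i=0$); hence the product is $\le 2^{|v|/p}$, and multiplying the two estimates yields the claimed bound.

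The statement is essentially a corollary of Lemma~\ref{conti_id} and Lemma~\ref{prop_{L^1}_{L^1}}, so the argument is mostly bookkeeping; the points that deserve care are: verifying the factorization of $W(\mathbf{k}^{\mathbf{u}}_{\le})$ and $\mathrm{wal}_{\mathbf{k}^{\mathbf{u}}_{>}}$ through the $v$-coordinates, so that integrating out $\mathbf{x}_{S\backslash v}$ reproduces precisely the functional used to define $\|\cdot\|_p$; justifying the Fubini interchange, which is immediate from boundedness of the relevant derivative together with the $p=\infty$ case of Lemma~\ref{prop_{L^1}_{L^1}} giving $\|W(k)\|_{L^\infty}\le 2$; and treating the $p=1$ and $p=\infty$ endpoints of H\"{o}lder's inequality consistently with the conventions in Theorem~\ref{yKH}.
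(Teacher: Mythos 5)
Your proposal is correct and follows essentially the same route as the paper: apply Lemma \ref{conti_id}, observe that $W(\mathbf{k}^{\mathbf{u}}_{\le})$ and $\mathrm{wal}_{\mathbf{k}^{\mathbf{u}}_{>}}$ factor through the $v$-coordinates (with $|\mathrm{wal}|\equiv 1$), integrate out $\mathbf{x}_{S\backslash v}$, and finish with H\"{o}lder's inequality together with the $L^{p/(p-1)}$ bound on $W$ from Lemma \ref{prop_{L^1}_{L^1}}. Your explicit handling of the case $k_{i,\le}^{u_i}=0$ (where $W(0)\equiv 1$, outside the scope of Lemma \ref{prop_{L^1}_{L^1}}) is a small point the paper passes over silently.
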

\begin{proof}
We write $\mathbf{x}_v=(x_i)_{i\in v}$ for $\mathbf{x}\in[0,1)^s$.
We see that $W((\mathbf{k}_v;\mathbf{0})_{\leq}^{\mathbf{u}})(\mathbf{x})\equiv W({\mathbf{k}_v}_{\leq}^{\mathbf{u}})(\mathbf{x}_v)$
and $\mathrm{wal}_{(\mathbf{k}_v;\mathbf{0})^\mathbf{u}_{>}}(\mathbf{x})\equiv \mathrm{wal}_{{\mathbf{k}_v}^\mathbf{u}_{>}}(\mathbf{x}_v)$
since $W(0)\equiv\mathrm{wal}_{0}\equiv 1$.
And we see $|\mathrm{wal}_{{\mathbf{k}_v}^\mathbf{u}_{>}}|\equiv 1$
by the definition of Walsh functions.
Combining those facts and Lemma \ref{conti_id}, we have
\begin{eqnarray*}
& &|\hat{f}((\mathbf{k}_v;\mathbf{0}))|\le 2^{-\mu ' _{\mathbf{u}}((\mathbf{k}_v;\mathbf{0}))}
\left|\int_{[0,1)^s}f^{((\min (\mathbf{u},\mathbf{N}_{\mathbf{k}_v});\mathbf{0}))}(\mathbf{x}) \cdot W({\mathbf{k}_v}_{\leq}^{\mathbf{u}})(\mathbf{x})\cdot \mathrm{wal}_{{\mathbf{k}_v}^\mathbf{u}_{>}}(\mathbf{x})\, d\mathbf{x}\right|\\
& &\le 2^{-\mu ' _{\mathbf{u}}((\mathbf{k}_v;\mathbf{0}))}
 \int_{[0,1)^v}\left|\int_{[0,1)^{|S\backslash v|}}f^{( (\min(\mathbf{u},\mathbf{N}_{\mathbf{k}_v});\mathbf{0}))}(\mathbf{x})\ d\mathbf{x}_{S\backslash v}\right|
 \cdot  \left| W({\mathbf{k}_v}_{\leq}^{\mathbf{u}})(\mathbf{x}_v)\right|\, d\mathbf{x}_v\\
& &\le 2^{-\mu ' _{\mathbf{u}}((\mathbf{k}_v;\mathbf{0}))}
\|f^{(\min(\mathbf{u},\mathbf{N}_{\mathbf{k}_v}))}\|_p
\cdot \| W({\mathbf{k}_v}_{\leq}^{\mathbf{u}})\|_{L^{\frac{p}{p-1}}},
\end{eqnarray*}
%\le 2^{-\mu ' _{\mathbf{u}}(\mathbf{k}_v)} 
%2^{\frac{|v|}{p}} \|f^{(\min(\mathbf{u},\mathbf{N}_{\mathbf{k}_v}))}\|_p,
%\end{eqnarray*}
where we used H\"{o}lder's inequality in the third inequality.
By Lemma \ref{prop_{L^1}_{L^1}}, we have
\begin{eqnarray*}
\| W({\mathbf{k}_v}_{\leq}^{\mathbf{u}})\|_{L^{\frac{p}{p-1}}}
=\prod_{i\in v}\| W({k_i}_{\leq}^{u_i})\|_{L^{\frac{p}{p-1}}}
\le 2^{\frac{|v|}{p}}.
\end{eqnarray*}
Thus we obtain the result.
\qquad \end{proof}

In particular, when $u_i=\alpha\in\mathbb{N}\cup\{\infty\}$ for every $i$, we have Theorem \ref{yhat}.

In the following  section, we will prove the lemmas which we used in this section.
From now, we denote by $\prod_{i=1}^n\varphi_i$
the composition of maps $\varphi _1\circ\dots\circ\varphi _n$.  

%%%%%%%%%%%%%%%%%%%%%%%%%%%%%%%%%%%%%%%%%%%%%%%
% end: Section, A new upper bound on the integration error and a new figure of merit for QMC integration
%%%%%%%%%%%%%%%%%%%%%%%%%%%%%%%%%%%%%%%%%%%%%%%
%%%%%%%%%%%%%%%%%%%%%%%%%%%%%%%%%%%%%%%%%%%%%%%
% begin: Section, section_proof_hat_disc
%%%%%%%%%%%%%%%%%%%%%%%%%%%%%%%%%%%%%%%%%%%%%%%
\section{Proof of Lemmas}
\label{proofofhatdisc}
%%%%%%%%%%%%%%%%%%%%%%%%%%%%%%%%%%%%%%%%%%%%%%%
% begin: Lemma, lem_hat_diff_1
%%%%%%%%%%%%%%%%%%%%%%%%%%%%%%%%%%%%%%%%%%%%%%%
\begin{definition}
\label{dpq_def}
We use the same symbols as in Definition \ref{dfn_dyadic_diff}.
Let $\mathbf{p}=(p_i)_{i\in v},\mathbf{q}=(q_i)_{i\in v}\in\mathbb{N}^{|v|}$ with $1\le p_i\le q_i\le N_i$
We use the following symbols in the proof.
\begin{eqnarray*}
\mathbf{d}_{\mathbf{p}}^{\mathbf{q}}:=
\prod_{i\in v,p_i\le j\le q_i}\partial_{i,a_{i,j}+1}.
\end{eqnarray*}
We use this symbol when we can recognize $\mathbf{k}$. 
\end{definition}

\subsection{Proof of Lemma \ref{disc_id} }
\begin{proof}
We prove only the case $s=1$ here.
In the case $s>1$, we obtain the result by applying the same method in a component-wise fashion.

We easily obtain the first statement as follows.
Let $k_1=\sum_{j=1}^{N_1}2^{a_{j}}$.
Since $\partial_{1,a_{j}+1}$ is the sum of $f(x_1\oplus 2^{-a_{j}-1})\in L^1([0,1))$ and $f\in L^1([0,1))$,
we have $\partial _{1,a_{j}+1}f\in L^1([0,1)^s)$.
By repeating this argument, we have $\mathbf{d}_{{(k_1)}_{(u_1)}}(f)\in L^1([0,1))$.

We show the second statement inductively.
We omit the case $k_1=0$
or $u_1=0$ since the proof is easy.
We show the case $u_1=1$:
\begin{eqnarray}
\label{hat_id_lem_s1}
\hat{f}(k_1)=(-1)\cdot 2^{-a_{j}-2}\cdot\widehat{\partial _{1,a_{j}+1}(f)}(k_1),
\end{eqnarray}
where $k_1=\sum_{j=1}^{N_1}2^{a_{j}}$.
By changing variables $x_1\mapsto x_1\oplus2^{-a_{j}-1}$, we have
\begin{eqnarray*}
& &\int_0^1 f(x_1\oplus 2^{-a_{j}-1}) \cdot \mathrm{wal}_{k_1}(x_1) \ dx_1\\
& &=\sum_{c=0}^{2^{a_{j}}-1}
\Big( \int_{2^{-a_{j}-1}\cdot 2c}^{2^{-a_{j}-1}\cdot (2c+1)}
 f(x_1+2^{-a_{j}-1}) \cdot \mathrm{wal}_{k_1}(x_1) \ dx_1\\
& &+\int_{2^{-a_{j}-1}\cdot (2c+1)}^{2^{-a_{j}-1}\cdot (2c+2)}
 f(x_1- 2^{-a_{j}-1}) \cdot \mathrm{wal}_{k_1}(x_1) \ dx_1\Big)\\
& &=\sum_{c=0}^{2^{a_{j}}-1}\Big(\int_{2^{-a_{j}-1}\cdot (2c+1)}^{2^{-a_{j}-1}\cdot (2c+2)}
 f(x_1) \cdot \mathrm{wal}_{k_1}(x_1- 2^{-a_{j}-1}) \ dx_1\\
& &+\int_{2^{-a_{j}-1}\cdot 2c}^{2^{-a_{j}-1}\cdot (2c+1)}
 f(x_1) \cdot \mathrm{wal}_{k_1}(x_1+ 2^{-a_{j}-1}) \ dx_1\Big)\\
& &=\int_0^1 f(x_1) \cdot \mathrm{wal}_{k_1}(x_1\oplus 2^{-a_{j}-1}) \ dx_1
=\int_0^1 f(x_1) \cdot \mathrm{wal}_{k_1}(x_1)\cdot
\mathrm{wal}_{k_1}(2^{-a_{j}-1}) \ dx_1\\
& &=-\int_0^1 f(x_1) \cdot \mathrm{wal}_{k_1}(x_1) \ dx_1,
\end{eqnarray*}
where the last two identities follow from the definition of Walsh functions.
Using this calculation, we obtain
\begin{eqnarray*}
\widehat{\partial _{1,a_{j}+1}(f)}\ (k_1)=
-2\cdot 2^{a_{j}+1}\cdot\int_0^1 f(x_1) \cdot\mathrm{wal}_{k_1}(x_1) \ dx_1=(-1)\cdot 2^{a_{j}+2}\cdot\hat{f}(k_1).
\end{eqnarray*}
We write $U=\min(u_1,N_1)$.
Using (\ref{hat_id_lem_s1}) inductively, we obtain
\begin{eqnarray*}
& &\widehat{\mathbf{d}_{{(k_1)}_{(u_1)}}f}\ (k_1)
=\widehat{\mathbf{d}_{(1)}^{(U)}f}\ (k_1)
=(-1)\cdot 2^{a_{1}+2}\cdot\widehat{\mathbf{d}_{(2)}^{(U)}f}\ (k_1)\\
& &=(-1)^2\cdot 2^{\sum_{j=1}^2(a_{j}+2)}\cdot\widehat{\mathbf{d}_{(3)}^{(U)}f}\ (k_1)
=\dots
=(-1)^{U}\cdot 2^{\mu'_{(u_1)}((k_1))}\cdot\hat{f}(k_1),
\end{eqnarray*}
which is the result.
\qquad \end{proof}
%%%%%%%%%%%%%%%%%%%%%%%%%%%%%%%%%%%%%%%%%%%%%%%
% begin: Section, Proof of results in Section 3
%%%%%%%%%%%%%%%%%%%%%%%%%%%%%%%%%%%%%%%%%%%%%%%
\subsection{Proof of Lemma \ref{conti_id}}
\label{proofofhatconti}
%%%%%%%%%%%%%%%%%%%%%%%%%%%%%%%%%%%%%%%%%%%%%%%
% begin: subSection, The important properties of dyadic differences $\partial_{i,n}$
%%%%%%%%%%%%%%%%%%%%%%%%%%%%%%%%%%%%%%%%%%%%%%%
\subsubsection{Important properties of dyadic differences $\partial_{i,n}$}
In order to prove Lemma \ref{conti_id}, we show some properties of dyadic differences $\partial_{i,n}$.
We define the following symbols.
%%%%%%%%%%%%%%%%%%%%%%%%%%%%%%%%%%%%%%%%%%%%%%%
% begin: Definition, wd
%%%%%%%%%%%%%%%%%%%%%%%%%%%%%%%%%%%%%%%%%%%%%%%
\begin{definition}
\label{wd}
We use the same symbols as in Definition \ref{dfn_dyadic_diff} and \ref{dpq_def}.
We define
\begin{eqnarray*}
w_{i,n}&&:=\mathrm{wal}_{2^{n-1}}(x_i),\\
w\partial_{i,n}(g)&&:=w_{i,n}\cdot \partial_{i,n}(g)\quad \mathrm{for} \ g\colon [0,1)^s\rightarrow\mathbb{R},
\end{eqnarray*}
and
\begin{eqnarray*}
\mathbf{w}_{\mathbf{p}}^{\mathbf{q}}:=
\prod_{i\in v,p_i\le j\le q_i}w_{i,a_{i,j}+1},\quad 
\mathbf{wd}_{\mathbf{p}}^{\mathbf{q}}:=
\prod_{i\in v,p_i\le j\le q_i}w\partial_{i,a_{i,j}+1}.
\end{eqnarray*}
Notice that $\mathbf{w}_{\mathbf{p}}^{\mathbf{q}}$ is a function but
$\mathbf{wd}_{\mathbf{p}}^{\mathbf{q}}$ is an operator.
We can rewrite the Walsh function as follows: 
$\mathrm{wal}_{\mathbf{k}}=\mathbf{w}_{(1,\dots, 1)}^{(N_1,\dots ,N_s)}$.
\end{definition}
%\begin{remark}{\rm
%Let $\mathbf{k}=(k_1,\dots,k_t)\in\mathbb{N}_0^t$ with $k_i=\sum_{j=1}^{N_i}2^{a_{i,j}}$. 
%Then we have
%\begin{eqnarray*}
%\mathrm{wal}_{\mathbf{k}}(\mathbf{x})
%=\prod_{i\in\{ 1\le g\le s\mid k_g\neq 0\}}\prod_{j=1}^{N_i}w_{i,a_{i,j}+1}(\mathbf{x}).
%\end{eqnarray*}
%And notice that $w_{i,n}(\mathbf{x})$ is constant on $x_i$ on $[c2^{-n},(c+1)2^{-n})$ with an integer $c<2^n$.
%}\end{remark}

We see that $w_{j,m}$ and $\partial_{i,n}$ commute in the next lemma.
\begin{lemma}
\label{wd_dw}
When $(i,n)\neq(j,m)\in\mathbb{N}^2$, for a function $g\colon[0,1)^s\rightarrow\mathbb{R}$,
we have the following identity:
\begin{eqnarray*}
w_{j,m}\cdot\partial_{i,n}(g)=\partial_{i,n}(g\cdot w_{j,m}).
\end{eqnarray*}
\end{lemma}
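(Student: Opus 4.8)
\textbf{Proof proposal for Lemma \ref{wd_dw}.}

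The plan is to verify the identity by writing out both sides explicitly from the definitions of $w_{j,m}=\mathrm{wal}_{2^{m-1}}(x_j)$ and of the dyadic difference $\partial_{i,n}$, and then checking that the only point at issue is whether the factor $w_{j,m}$ is left unchanged when the $i$-th coordinate is replaced by $x_i\oplus 2^{-n}$. Since $i\neq j$ or $n\neq m$, this will follow from the fact that $\mathrm{wal}_{2^{m-1}}(x_j)$ depends only on the $m$-th dyadic digit of $x_j$, which is untouched by the operation $x_i\mapsto x_i\oplus 2^{-n}$ in either case.

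First I would unfold the right-hand side: for $g\colon[0,1)^s\to\mathbb{R}$,
\begin{eqnarray*}
\partial_{i,n}(g\cdot w_{j,m})(\mathbf{x})
=\frac{g(\dots,x_i\oplus 2^{-n},\dots)w_{j,m}(\dots,x_i\oplus 2^{-n},\dots)
-g(\mathbf{x})w_{j,m}(\mathbf{x})}{2^{-n}}.
\end{eqnarray*}
Next I would argue that $w_{j,m}(\dots,x_i\oplus 2^{-n},\dots)=w_{j,m}(\mathbf{x})$. There are two cases. If $i\neq j$, then the $j$-th coordinate is not modified at all, so the equality is immediate. If $i=j$ but $n\neq m$, then the operation $x_i\mapsto x_i\oplus 2^{-n}$ only flips the $n$-th dyadic digit of $x_i$ (by the definition $z\oplus 2^{-n}=z+2^{-n}(-1)^{z_n}$ in Definition \ref{dfn_dyadic_diff}, which, as one checks, flips digit $n$ and leaves all other digits of the terminating expansion fixed), whereas $w_{i,m}=\mathrm{wal}_{2^{m-1}}(x_i)=(-1)^{b_{i,m}}$ depends only on the $m$-th digit $b_{i,m}$; since $m\neq n$, this digit is unchanged. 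In both cases $w_{j,m}$ can be factored out of the numerator, giving
\begin{eqnarray*}
\partial_{i,n}(g\cdot w_{j,m})(\mathbf{x})
=w_{j,m}(\mathbf{x})\cdot\frac{g(\dots,x_i\oplus 2^{-n},\dots)-g(\mathbf{x})}{2^{-n}}
=w_{j,m}(\mathbf{x})\cdot\partial_{i,n}(g)(\mathbf{x}),
\end{eqnarray*}
which is the claimed identity.

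The only genuinely delicate point is the digit-flip claim used in the case $i=j$, $n\neq m$: one must confirm that $z\oplus 2^{-n}$ flips precisely the $n$-th digit of the (terminating) dyadic expansion of $z$ and affects no other digit — in particular it does not trigger a carry, because $z_n\mapsto 1-z_n$ is realized by adding or subtracting $2^{-n}$ exactly according to the sign $(-1)^{z_n}$. This is a short but careful check on the representation, and it is where I would spend the care; everything else is bookkeeping with the definitions.
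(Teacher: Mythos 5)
Your proof is correct, and it supplies exactly the verification the paper skips: the paper's ``proof'' of Lemma \ref{wd_dw} is the single line ``We omit the proof here since it is easy.'' Your case split ($i\neq j$ trivially; $i=j$, $n\neq m$ via the observation that $z\mapsto z\oplus 2^{-n}$ flips only the $n$-th dyadic digit without carries, while $w_{j,m}=(-1)^{b_{j,m}}$ depends only on the $m$-th digit) is the natural argument the authors evidently had in mind, and your flagging of the no-carry point as the one step requiring care is well placed.
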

\begin{proof}
We omit the proof here since it is easy.
\qquad \end{proof}

We first prove the following property.
\begin{lemma}
\label{wd_L^1}
We use the same symbols in the above definition. 
For a function $g\in L^1([0,1)^s)$, we have $\mathbf{wd}_{\mathbf{p}}^{\mathbf{q}}g \in L^1([0,1)^s)$. 
\end{lemma}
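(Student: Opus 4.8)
The plan is to reduce the claim to the one-dimensional case and then argue by induction on the number of factors in the composition $\mathbf{wd}_{\mathbf{p}}^{\mathbf{q}}$. Since $\mathbf{wd}_{\mathbf{p}}^{\mathbf{q}}$ is a finite composition of the elementary operators $w\partial_{i,a_{i,j}+1}$, it suffices to show that each single operator $w\partial_{i,n}$ maps $L^1([0,1)^s)$ into itself, and then to compose. So the core lemma to establish is: if $g\in L^1([0,1)^s)$ then $w\partial_{i,n}(g) = w_{i,n}\cdot\partial_{i,n}(g)\in L^1([0,1)^s)$.

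First I would recall that $\partial_{i,n}(g)(\mathbf{x}) = 2^n\bigl(g(x_1,\dots,x_i\oplus 2^{-n},\dots,x_s) - g(\mathbf{x})\bigr)$, so it is a fixed scalar multiple of a difference of $g$ with a "shifted" copy of $g$. The shift $x_i\mapsto x_i\oplus 2^{-n}$ is, on each dyadic interval of length $2^{-n}$, either a translation by $+2^{-n}$ or a translation by $-2^{-n}$; in particular it is a measure-preserving bijection of $[0,1)$ (this is exactly the computation already carried out in the proof of Lemma \ref{disc_id}, where the substitution $x_1\mapsto x_1\oplus 2^{-a_j-1}$ was used to rearrange the integral). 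Hence $\mathbf{x}\mapsto g(x_1,\dots,x_i\oplus 2^{-n},\dots,x_s)$ lies in $L^1([0,1)^s)$ with the same $L^1$-norm as $g$, and therefore $\partial_{i,n}(g)\in L^1([0,1)^s)$ with $\|\partial_{i,n}(g)\|_{L^1}\le 2^{n+1}\|g\|_{L^1}$. Multiplying by $w_{i,n}=\mathrm{wal}_{2^{n-1}}(x_i)$, which is a bounded (indeed $\pm 1$-valued) measurable function, keeps us in $L^1([0,1)^s)$, so $w\partial_{i,n}(g)\in L^1([0,1)^s)$.

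Next I would run the induction on $\sum_{i\in v}(q_i-p_i+1)$, the number of elementary factors in $\mathbf{wd}_{\mathbf{p}}^{\mathbf{q}}$. The base case (zero factors, the identity operator) is trivial. For the inductive step, write $\mathbf{wd}_{\mathbf{p}}^{\mathbf{q}} = (w\partial_{i,a_{i,j}+1})\circ \mathbf{wd}_{\mathbf{p}'}^{\mathbf{q}'}$ for an appropriate choice of outermost factor, where $\mathbf{wd}_{\mathbf{p}'}^{\mathbf{q}'}$ has one fewer factor; by the inductive hypothesis $\mathbf{wd}_{\mathbf{p}'}^{\mathbf{q}'}g\in L^1([0,1)^s)$, and by the single-operator case just proved, applying $w\partial_{i,a_{i,j}+1}$ keeps us in $L^1([0,1)^s)$. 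This gives $\mathbf{wd}_{\mathbf{p}}^{\mathbf{q}}g\in L^1([0,1)^s)$, completing the proof.

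The only subtle point — and the one I expect to be the main obstacle if one wants to be careful — is justifying that the dyadic shift $x_i\mapsto x_i\oplus 2^{-n}$ is measure-preserving and that composing an $L^1$ function with it again yields an $L^1$ function defined almost everywhere; but this is precisely the piecewise-translation structure already exploited in the proof of Lemma \ref{disc_id}, so it can simply be invoked. Everything else is bookkeeping: boundedness of $w_{i,n}$ and finiteness of the composition.
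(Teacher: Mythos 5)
Your proof is correct. The ingredients are the same ones the paper uses --- the dyadic shift $x_i\mapsto x_i\oplus 2^{-n}$ is a measure-preserving piecewise translation of $[0,1)$, so $\partial_{i,n}$ maps $L^1$ into $L^1$, and the Walsh factors $w_{i,n}$ are $\pm1$-valued --- but you organize them differently. The paper first invokes Lemma \ref{wd_dw} to commute every Walsh factor to the outside, rewriting $\mathbf{wd}_{\mathbf{p}}^{\mathbf{q}}g$ as $\mathbf{w}_{\mathbf{p}}^{\mathbf{q}}\cdot\mathbf{d}_{\mathbf{p}}^{\mathbf{q}}g$, then notes $|\mathbf{w}_{\mathbf{p}}^{\mathbf{q}}|\equiv 1$ and cites the $L^1$-stability of the pure dyadic differences already established in the proof of Lemma \ref{disc_id}. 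You instead show that each elementary operator $w\partial_{i,n}$ preserves $L^1([0,1)^s)$ and induct on the number of factors in the composition, which makes the commutation lemma unnecessary for this particular statement. Your route is slightly more self-contained and even yields the quantitative bound $\|w\partial_{i,n}(g)\|_{L^1}\le 2^{n+1}\|g\|_{L^1}$; the paper's route is shorter in context because the identity $\mathbf{wd}_{\mathbf{p}}^{\mathbf{q}}g=\mathbf{w}_{\mathbf{p}}^{\mathbf{q}}\cdot\mathbf{d}_{\mathbf{p}}^{\mathbf{q}}g$ is needed again in the proof of Lemma \ref{conti_id} anyway. Both arguments are complete, and your treatment of the only delicate point (that composing an $L^1$ function with the dyadic shift is again $L^1$, defined almost everywhere) is exactly the right one.
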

\begin{proof}
Since the above lemma, we have that
$\mathbf{wd}_{\mathbf{p}}^{\mathbf{q}}g$
equals $\mathbf{w}_{\mathbf{p}}^{\mathbf{q}}\cdot\mathbf{d}_{\mathbf{p}}^{\mathbf{q}}g$.
By the definition of Walsh functions, we see $|\mathbf{w}_{\mathbf{p}}^{\mathbf{q}}|\equiv  1$. 
And since $\mathbf{d}_{\mathbf{p}}^{\mathbf{q}}g$
is the sum of the functions in $L^1([0,1)^s)$ as in the proof of Lemma \ref{disc_id},
we have $\mathbf{d}_{\mathbf{p}}^{\mathbf{q}}g \in L^1([0,1)^s)$. 
Thus the result follows.
\qquad \end{proof}
\subsubsection{Proof of Lemma \ref{conti_id}}
The following Lemma is the key to prove Lemma \ref{conti_id},
which connects $w\partial_{i,n}(g)$ with the derivative $\frac{\partial g}{\partial x_i}$.
\begin{lemma}
\label{chi}
Let $n,s,i\in\mathbb{N}$ satisfy $s\ge i$. Let $g\colon[0,1]^s\to\mathbb{R}$ as a function of 
the $i$th component $x_{i}$, satisfy
\begin{eqnarray}
\label{f_C_{L^1}}
g\in C^1\left( \big[ 2^{-n+1}c,2^{-n+1}(c+1) \big) \right), \quad c=0,\dots,2^{n-1}-1.
\end{eqnarray}
Then for any $\mathbf{z}=(z_1,\dots,z_s)\in[0,1)^s$, we have
\begin{eqnarray*}
w\partial_{i,n}(g)(\mathbf{z})=\int_0^1{\frac{\partial g}{\partial x_{i}}}^*
(z_1,\dots, z_{i-1}, y,z_{i+1}, \dots, z_s )
\cdot \chi_n (z_{i},y) \ d y,
\end{eqnarray*}
where we define 
\begin{eqnarray*}
{\frac{\partial g}{\partial x_i}}^*:=\frac{\partial g}{\partial x_i}
\quad \mathrm{on} \ x_i\in\big[ 2^{-n+1}c,2^{-n+1}(c+1)\big)
, \quad c=0,\dots,2^{n-1}-1.
\end{eqnarray*}
\end{lemma}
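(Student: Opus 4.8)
The plan is to reduce the statement to a one-variable computation in the $i$th coordinate, since all other coordinates are merely spectators: fix $z_1,\dots,z_{i-1},z_{i+1},\dots,z_s$ and regard everything as a function of $x_i$ alone. Write $h(x_i):=g(z_1,\dots,z_{i-1},x_i,z_{i+1},\dots,z_s)$, so that $w\partial_{i,n}(g)(\mathbf{z}) = \mathrm{wal}_{2^{n-1}}(z_i)\cdot 2^n\bigl(h(z_i\oplus 2^{-n})-h(z_i)\bigr)$, and the claim becomes the scalar identity
\begin{eqnarray*}
\mathrm{wal}_{2^{n-1}}(z_i)\cdot 2^n\bigl(h(z_i\oplus 2^{-n})-h(z_i)\bigr)
= \int_0^1 (h')^*(y)\,\chi_n(z_i,y)\,dy,
\end{eqnarray*}
where $(h')^*$ denotes the derivative taken within each dyadic interval $[2^{-n+1}c,2^{-n+1}(c+1))$.

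The key step is to unwind the definitions of $\oplus$ and of $\mathrm{wal}_{2^{n-1}}$. For $z_i$ with dyadic expansion $z_i=\sum_j z_{i,j}2^{-j}$, the $n$th digit $z_{i,n}$ determines both quantities: $\mathrm{wal}_{2^{n-1}}(z_i)=(-1)^{z_{i,n}}$, and $z_i\oplus 2^{-n}=z_i+2^{-n}(-1)^{z_{i,n}}$. I split into the two cases. If $z_{i,n}=0$, then $z_i$ lies in the left half $[2^{-n+1}c,2^{-n+1}c+2^{-n})$ of its length-$2^{-n+1}$ dyadic interval, $z_i\oplus 2^{-n}=z_i+2^{-n}$ lies in the right half of the \emph{same} interval, and $\mathrm{wal}_{2^{n-1}}(z_i)=1$; then $2^n(h(z_i+2^{-n})-h(z_i))=2^n\int_{z_i}^{z_i+2^{-n}}h'(y)\,dy$ by the fundamental theorem of calculus, valid because $h$ is $C^1$ on that whole interval by hypothesis (\ref{f_C_{L^1}}), and this is exactly $\int_0^1 (h')^*(y)\chi_n(z_i,y)\,dy$ since $\chi_n(z_i,y)=2^n$ precisely on $[\min(z_i,z_i\oplus 2^{-n}),\max(z_i,z_i\oplus 2^{-n})]=[z_i,z_i+2^{-n}]$. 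If $z_{i,n}=1$, then $z_i$ is in the right half, $z_i\oplus 2^{-n}=z_i-2^{-n}$ is in the left half of the same interval, and $\mathrm{wal}_{2^{n-1}}(z_i)=-1$; now $-1\cdot 2^n(h(z_i-2^{-n})-h(z_i))=2^n(h(z_i)-h(z_i-2^{-n}))=2^n\int_{z_i-2^{-n}}^{z_i}h'(y)\,dy$, again matching $\int_0^1(h')^*(y)\chi_n(z_i,y)\,dy$ because $\chi_n(z_i,\cdot)$ is supported on $[z_i-2^{-n},z_i]$. In both cases the relevant integration interval stays inside one set of the form $[2^{-n+1}c,2^{-n+1}(c+1))$, so $(h')^*=h'$ there and no issue arises at the dyadic breakpoints.

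I expect the only subtlety—hardly an obstacle—is bookkeeping around the endpoints: one must check that $z_i\oplus 2^{-n}$ never crosses a multiple of $2^{-n+1}$, which is exactly why the hypothesis is phrased in terms of $C^1$ on the half-open intervals $[2^{-n+1}c,2^{-n+1}(c+1))$ rather than on all of $[0,1]$, and why the starred derivative is needed; this is also why the integrand $(h')^*\chi_n$ is well-defined even when $g'$ itself might jump at the dyadic rationals. After these two cases, restoring the spectator coordinates gives the stated identity for all $\mathbf{z}\in[0,1)^s$, completing the proof.
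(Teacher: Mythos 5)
Your proof is correct and follows essentially the same route as the paper: the paper also splits on whether $z_i$ lies in an even- or odd-indexed subinterval of length $2^{-n}$ (equivalently, on the $n$th binary digit of $z_i$, which determines both the sign $\mathrm{wal}_{2^{n-1}}(z_i)=(-1)^{z_{i,n}}$ and the direction of $\oplus 2^{-n}$), applies the fundamental theorem of calculus on the resulting interval, and identifies $[\min(z_i,z_i\oplus 2^{-n}),\max(z_i,z_i\oplus 2^{-n})]$ with the support of $\chi_n(z_i,\cdot)$. The only cosmetic difference is that you write out both cases while the paper does one and notes the other is symmetric.
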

%%%%%%%%%%%%%%%%%%%%%%%%%%%%%%%%%%%%%%%%%%%%%%%
% end: Lemma, chi
%%%%%%%%%%%%%%%%%%%%%%%%%%%%%%%%%%%%%%%%%%%%%%%
%%%%%%%%%%%%%%%%%%%%%%%%%%%%%%%%%%%%%%%%%%%%%%%
% begin: Lemma, proof, chi
%%%%%%%%%%%%%%%%%%%%%%%%%%%%%%%%%%%%%%%%%%%%%%%
\begin{proof}
Let $c'\in\mathbb{N}_0$ satisfying $z_{i}\in [2^{-n}c',2^{-n}(c'+1))$.
We consider two cases: $c'=2c$ and $c'=2c+1$ for some integer $c$.
We only calculate the case $c=2c'$ since the other case can be calculated by the same way.
In this case, by the calculation $w_{i,n}(z_{i})=1$ and the assumption (\ref{f_C_{L^1}}), we have
\begin{eqnarray*}
w\partial_{i,n}(g)(\mathbf{z})
& &=\partial_{i,n}(g)(\mathbf{z})
=2^n\cdot \left( g(z_1,\dots, z_{i}+2^{-n},\dots ,z_s )-g(z_1,\dots ,z_s ) \right) \\
& &=\int_{z_{i}}^{z_{i}+2^{-n}}2^n\cdot \frac{\partial g}{\partial x_{i}}(z_1,\dots ,z_{i-1},y,z_{i+1}, \dots ,z_s) \ d y\\
& &=\int _0^1 {\frac{\partial g}{\partial x_{i}}}^*(z_1,\dots ,z_{i-1},y,z_{i+1},\dots ,z_s) \cdot \chi_n (z_{i},y) \ d y.
\end{eqnarray*}
The last equality follows from $[z_{i},z_{i}+2^{-n}]=\left[\min (z_{i},z_{i}\oplus 2^{-n}),\max (z_{i},z_{i}\oplus 2^{-n})\right]$ and the definition of $\chi_n$. 
\qquad \end{proof}

%%%%%%%%%%%%%%%%%%%%%%%%%%%%%%%%%%%%%%%%%%%%%%%
% end: Lemma, proof, chi
%%%%%%%%%%%%%%%%%%%%%%%%%%%%%%%%%%%%%%%%%%%%%%%
We prove Lemma \ref{conti_id} using these results.
\begin{proof}
%{ Lemma \ref{conti_id}}\\
We prove the case $s=1$.
We omit the case $k_1=0$ or $u_1=0$ since the proof is easy.
We write $k_1=\sum_{j=1}^N2^{a_j}$ and $U=\min(u_1,N)$ here.
We assume that $u_1\ge 1$, then we have
\begin{eqnarray*}
\widehat{\mathbf{d}_{{(k_1)}_{(u_1)}} f}\ (k_1)
& &=\widehat{\mathbf{d}_{(1)}^ {(U)} f}\ (k_1)
=\int_0^1\Big(\mathbf{d}_{(1)}^ {(U)}f\Big) ( x_1)
\cdot\mathbf{w}_{(1)}^ {(N)}(x_1) \, dx_1\\
& &=\int_0^1
\mathbf{wd}_{(1)}^ {(U)} \Big(f\cdot\mathbf{w}_ {(U+1)}^ {(N)}\Big) (x_1) \, dx_1.
\end{eqnarray*}
We use Lemma \ref{wd_dw} in the third equality. Using the assumption of $f$ and the definition of $w_{a_j+1}$, we have that
\begin{eqnarray*}
f\cdot\mathbf{w}_ {(U+1)}^ {(N)}
\in C^{u_1}\left( \big[ 2^{-a_ {U+1}-1}c,2^{-a_{U +1}-1}(c+1)\big) \right) ,
\end{eqnarray*}
and we have
\begin{eqnarray*}
\frac{d }{d x_1}\left(f\cdot\mathbf{w}_ {(U+1)}^ {(N)}\right)
=\Big(\frac{d f}{d x_1}\cdot\mathbf{w}_ {(U+1)}^ {(N)}\Big)
\quad \mathrm{on}\ \big[ 2^{-a_ {U+1}-1}c,2^{-a_ {U+1}-1}(c+1)\big) ,
\end{eqnarray*}
with $0\le c\le 2^{a_ {U+1}+1}-1$.

Let $1\le n,0\le c'< 2^n$ be integers.
By the definition of $wd_{1,n}$, we have that, if
$g\in C^{1}\big([c2^{-n},(c+1)2^{-n})\big)$,
it holds that 
$wd_{1,n}g\in C^{1}\big([c2^{-n},(c+1)2^{-n})\big)$ and
$\frac{d}{d x_1} (wd_{1,n}g)=
wd_{1,n} (\frac{d g}{d x_1})$
on $[c2^{-n},(c+1)2^{-n})$.

If we take $g=f\cdot\mathbf{w}_ {(U+1)}^ {(N)}$ and $n=a_ {U}+1$, we have
%Applying this argument to the function $f\cdot\prod_{j>U}w_{a_j+1}$,
\begin{eqnarray*}
\frac{d }{d x_1}
\left(wd_{1,a_ {U}+1} \Big(f\cdot\mathbf{w}_ {(U+1)}^ {(N)}\Big)\right)
=wd_{1,a_ {U}+1}
\Big(\frac{d f}{d x_1}\cdot\mathbf{w}_ {(U+1)}^ {(N)}\Big)
\quad \mathrm{on}\ \big[c2^{-a_ {U}-1},(c+1)2^{-a_ {U}-1}\big) ,
\end{eqnarray*}
where $0\le c\le 2^{a_ {U}+1}-1$.
Applying this argument inductively,
we have
\begin{eqnarray*}
\frac{d }{d x_1}
\left(\mathbf{wd}_{(2)}^ {(U)} \Big( f\cdot\mathbf{w}_ {(U+1)}^ {(N)}\Big)\right)
=\mathbf{wd}_{(2)}^ {(U)}
\Big(\frac{d f}{d x_1}\cdot\mathbf{w}_ {(U+1)}^ {(N)}\Big)
\quad \mathrm{on}\ \big[c2^{-a_{2}-1},(c+1)2^{-a_{2}-1}\big) ,
\end{eqnarray*}
where $0\le c\le 2^{a_{2}+1}-1$.
Since $2^{-a_{2}-1}\ge 2^{-a_{1}}$, we can take
$n=a_{1}+1$ and $g=\mathbf{wd}_{(2)}^ {(U)} \Big( f\cdot\mathbf{w}_ {(U+1)}^ {(N)}\Big)$ in Lemma \ref{chi}.
Then we continue the computation of $\widehat{\mathbf{d}_{{(k_1)}_{(u_1)}}f}\ (k_1)$ as follows
\begin{eqnarray*}
& &\widehat{\mathbf{d}_{{(k_1)}_{(u_1)}}f}\ (k_1)
=\int_0^1 wd_{1,a_{1}+1}
%\circ
\Big(\mathbf{wd}_{(2)}^ {(U)}\big( f\cdot\mathbf{w}_ {(U+1)}^ {(N)}\big)\Big) (x_1) \, dx_1\\
& &=\int_0^1
\left(\int_0^1 \chi_{a_{1}+1}(x_1,y) \cdot
\mathbf{wd}_{(2)}^ {(U)}
\Big(\frac{d f}{d x_1}\cdot\mathbf{w}_ {(U+1)}^ {(N)}\Big) (y) \, dy\right) \, dx_1.
\end{eqnarray*}
Now we have $\frac{d f}{d x_1}\cdot\mathbf{w}_ {(U+1)}^ {(N)}\in L^1([0,1))$ since $|w_{a_{j}+1}|\equiv 1$ and the assumption of $f$.
Therefore if we take $g=\frac{d f}{d x_1}\cdot\mathbf{w}_ {(U+1)}^ {(N)}$
in Lemma \ref{wd_L^1} and consider the fact $|\chi_{a_{1}+1}(x_1,y)|\le 2^{a_{1}+1}$,
we see that the integrand  $\chi_{a_{1}+1} \cdot\mathbf{wd}_{(2)}^ {(U)}
\Big(\frac{d f}{d x_1}\cdot\mathbf{w}_ {(U+1)}^ {(N)}\Big)$ in the last line
belongs to $L^1([0,1)^2)$. Thus we can use Fubini's Theorem as follows.
\begin{eqnarray*}
\widehat{\mathbf{d}_{{(k_1)}_{(u_1)}}f}\ (k_1)
& &=\int_0^1\left(\int_0^1 \chi_{a_{1}+1}(x_1,y) \, dx_1\right)\cdot
\mathbf{wd}_{(2)}^ {(U)}
\Big(\frac{d f}{d x_1}\cdot\mathbf{w}_ {(U+1)}^ {(N)}\Big) (y) \, dy\\
& &=\int_0^1\mathbf{wd}_{(2)}^ {(U)}
\Big(\frac{d f}{d y}\cdot\mathbf{w}_ {(U+1)}^ {(N)}\Big) (y) \cdot W(2^{a_{1}})(y)\, dy\\
& &=\int_0^1\mathbf{wd}_{(2)}^ {(U)}
\Big(\frac{d f}{d x_1}\cdot\mathbf{w}_ {(U+1)}^ {(N)}\Big) (x_1) \cdot W(2^{a_{1}})(x_1)\, dx_1.
\end{eqnarray*}
By repeating the argument we have
\begin{eqnarray*}
\widehat{\mathbf{d}_{{(k_1)}_{(u_1)}}f}\ (k_1)
& &=\int_0^1 wd_{1,a_2+1}\left(\mathbf{wd}_{(3)}^ {(U)}
\Big(\frac{d f}{d x_1}\cdot\mathbf{w}_ {(U+1)}^ {(N)}\Big)\right) (x_1) \cdot W(2^{a_{1}})(x_1)\, dx_1\\
& &=\int_0^1
\left(\int_0^1 \chi_{a_{2}+1}(x_1,y) \cdot
\mathbf{wd}_{(3)}^ {(U)}
\Big(\frac{d f}{d x_1}\cdot\mathbf{w}_ {(U+1)}^ {(N)}\Big) (y) \, dy\right)\cdot
W(2^{a_1})(x_1)\, dx_1\\
& &=\int_0^1\left(\int_0^1 \chi_{ a_{2}+1}(x_1,y)\cdot W(2^{ a_{1}})(x_1) \, dx_1\right)\cdot
\mathbf{wd}_{(3)}^ {(U)}
\Big(\frac{d^2 f}{d x_1^2}\cdot\mathbf{w}_ {(U+1)}^ {(N)}\Big) (y) \, dy\\
& &=\int_0^1\mathbf{wd}_{(3)}^ {(U)}
\Big(\frac{d ^2 f}{d x_1^2}\cdot
\mathbf{w}_ {(U)}^ {(N)}\Big) (y) \cdot W(2^{a_{1}}+2^{a_{2}})(y)\, dy\\
& &=\int_0^1\mathbf{wd}_{(3)}^ {(U)}
\Big(\frac{d ^2 f}{d x_1^2}\cdot
\mathbf{w}_ {(U)}^ {(N)}\Big) (x_1) \cdot W(2^{a_{1}}+2^{a_{2}})(x_1)\, dx_1\\
& &=\cdots \\
& &=\int_0^1
%\Big(\prod_{j=1}^ {(U)}wd_{a_{j}+1}\Big)
\Big(\frac{d ^ {U}f}{d x_1^ {U}}\cdot\mathbf{w}_ {(U)}^ {(N)}\Big)(x_1)\cdot
W(k_{1,\leq}^ {u_1})(x_1) \ dx_1,
\end{eqnarray*}
%where $k_{1,\leq}^{u_1}=\sum_{j\leqU}2^{a_{j}}$.
thus we have the result for $s=1$.
By calculating in a component-wise manner, we have the result for the case $s\ge 1$.
We omit that case here.
\qquad \end{proof}
%%%%%%%%%%%%%%%%%%%%%%%%%%%%%%%%%%%%%%%%%%%%%%%

In fact, we can determine the sign of $\hat{f}(\mathbf{k})$ in the special case.
\begin{corollary}
Let $f\in C^{\infty}([0,1]^s)$ and $\mathbf{k}\in\mathbb{N}_0^s$.
We use the symbol $N_i$ appearing in Definition \ref{dfn_dyadic_diff}.
Then, if $f^{(N_1,\dots, N_s)}\ge 0$, we have $\hat{f}(\mathbf{k})\cdot (-1)^{\sum_{i=1}^s N_i}\ge 0$.
\end{corollary}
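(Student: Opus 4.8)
The plan is to apply Lemma \ref{conti_id} in the special case $\mathbf{u} = (\infty,\dots,\infty)$, so that $\min(\mathbf{u},\mathbf{N}_{\mathbf{k}}) = (N_1,\dots,N_s)$, and then inspect the sign of every factor appearing in the resulting integral representation. Since $f \in C^\infty([0,1]^s)$, the hypothesis of Lemma \ref{conti_id} is certainly satisfied for $\mathbf{u}=(\infty,\dots,\infty)$, and the lemma gives
\begin{eqnarray*}
\hat{f}(\mathbf{k}) = (-1)^{\sum_{i=1}^s N_i}\, 2^{-\mu'_{\mathbf{u}}(\mathbf{k})}
\int_{[0,1)^s} f^{(N_1,\dots,N_s)}(\mathbf{x})\cdot W(\mathbf{k}_{\le}^{\mathbf{u}})(\mathbf{x})\cdot \mathrm{wal}_{\mathbf{k}_{>}^{\mathbf{u}}}(\mathbf{x})\, d\mathbf{x},
\end{eqnarray*}
where in this case $\mathbf{k}_{\le}^{\mathbf{u}} = \mathbf{k}$ and $\mathbf{k}_{>}^{\mathbf{u}} = \mathbf{0}$, so that $\mathrm{wal}_{\mathbf{k}_{>}^{\mathbf{u}}} \equiv 1$.

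Next I would observe that the three remaining quantities in front of the sign factor are all nonnegative: $2^{-\mu'_{\mathbf{u}}(\mathbf{k})} > 0$ trivially; $W(\mathbf{k}) = \prod_{i=1}^s W(k_i) \ge 0$ on $[0,1)^s$ by Lemma \ref{prop_{L^1}_{L^1}} (applied coordinatewise, with the convention $W(0) \equiv 1 \ge 0$); and $f^{(N_1,\dots,N_s)} \ge 0$ by the hypothesis of the corollary. Hence the integrand is nonnegative, the integral is nonnegative, and therefore $\hat{f}(\mathbf{k})\cdot(-1)^{\sum_{i=1}^s N_i} = 2^{-\mu'_{\mathbf{u}}(\mathbf{k})}\int_{[0,1)^s} f^{(N_1,\dots,N_s)}(\mathbf{x})\, W(\mathbf{k})(\mathbf{x})\, d\mathbf{x} \ge 0$, which is exactly the claim. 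The case $\mathbf{k} = \mathbf{0}$ is trivial since then all $N_i = 0$, the sign factor is $1$, and $\hat{f}(\mathbf{0}) = I(f) = \int f^{(0,\dots,0)} \ge 0$.

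I do not anticipate a genuine obstacle here: the corollary is essentially a direct reading-off of signs from the integral formula of Lemma \ref{conti_id}, combined with the nonnegativity of $W(k)$ from Lemma \ref{prop_{L^1}_{L^1}}. The only point requiring a moment's care is making sure the sign exponent matches: Lemma \ref{conti_id} produces $(-1)^{|\min(\mathbf{u},\mathbf{N}_{\mathbf{k}})|_{l^1}}$, and for $\mathbf{u}=(\infty,\dots,\infty)$ this is $(-1)^{\sum_{i=1}^s N_i}$, as needed. One should also note the harmless detail that $W(k_i)$ for those $i$ with $N_i = 0$ contributes the constant factor $1$, which is consistent with the product $\prod_{i=1}^s W(k_i)$ in Definition \ref{W_def} and does not affect nonnegativity.
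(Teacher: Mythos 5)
Your proposal is correct and follows exactly the paper's own argument: apply Lemma \ref{conti_id} with $u_i=\infty$ so that the sign factor is $(-1)^{\sum_i N_i}$, note $\mathrm{wal}_{\mathbf{k}_{>}^{\mathbf{u}}}\equiv 1$, and conclude from the nonnegativity of $W(\mathbf{k})$ (Lemma \ref{prop_{L^1}_{L^1}} together with $W(0)\equiv 1$) and of $f^{(N_1,\dots,N_s)}$ that the integral is nonnegative. No gaps; the added remarks about the exponent and the $\mathbf{k}=\mathbf{0}$ case are harmless elaborations of the same proof.
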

\begin{proof}
By Lemma \ref{prop_{L^1}_{L^1}} and the fact $W(0)\equiv 1$, we have $W(\mathbf{k})=\prod_{i=1}^sW(k_i)\ge 0.$
By combining this fact and the assumption that $f^{(N_1,\dots, N_s)}\ge 0$, 
we have that the product $f^{(N_1,\dots,N_s)}\cdot W(\mathbf{k})\ge 0$.
Thus, by Lemma \ref{conti_id} with $u_i=\infty$ we have
\begin{eqnarray*}
\hat{f}(\mathbf{k})\cdot (-1)^{\sum_{i=1}^s N_i}=2^{-\mu '_{\infty} (\mathbf{k})}\cdot
\int_{[0,1)^s}f^{(N_1,\dots ,N_s)}(\mathbf{x}) \cdot W(\mathbf{k})(\mathbf{x}) \ d\mathbf{x}\ge0,
\end{eqnarray*}
which is the result.
\qquad \end{proof}
%%%%%%%%%%%%%%%%%%%%%%%%%%%%%%%%%%%%%%%%%%%%%%%
% end: Section, Proof of the results in Section 3
%%%%%%%%%%%%%%%%%%%%%%%%%%%%%%%%%%%%%%%%%%%%%%%
%%%%%%%%%%%%%%%%%%%%%%%%%%%%%%%%%%%%%%%%%%%%%%%
% begin: Section, Proof of Lemma \ref{prop_{L^1}_{L^1}}
%%%%%%%%%%%%%%%%%%%%%%%%%%%%%%%%%%%%%%%%%%%%%%%
\subsection{Proof of Lemma \ref{prop_{L^1}_{L^1}}}
\label{section_proof_W}
%%%%%%%%%%%%%%%%%%%%%%%%%%%%%%%%%%%%%%%%%%%%%%%
% begin: subSection, Important properties of $\chi_n(x,y)$ and $W(\mathbf{k})$
%%%%%%%%%%%%%%%%%%%%%%%%%%%%%%%%%%%%%%%%%%%%%%%
\subsubsection{Important properties of $\chi_n(x,y)$ and $W(k)$}
We show the important properties of $\chi_n(x,y)$ and $W(k)$ in this subsection.
See Definition \ref{W_def} for the definitions of $\chi_n(x,y)$ and $W(k)$. 

We see that $\chi_n(x,y)/2^n$ is a characteristic function of some region in $[0,1)^2$.
%%%%%%%%%%%%%%%%%%%%%%%%%%%%%%%%%%%%%%%%%%%%%%%
% begin: Lemma, period_chi
%%%%%%%%%%%%%%%%%%%%%%%%%%%%%%%%%%%%%%%%%%%%%%%
\begin{lemma}
\label{period_chi}
Let $n\in\mathbb{N}$ and $c\in\mathbb{N}_0$ satisfying $c<2^{n-1}.$\\
\begin{enumerate}
\item
Let $x,y\in[0,2^{-n+1}),$ then we have
\begin{eqnarray*}
\chi_n(x+c2^{-n+1},y+c2^{-n+1})=\chi_n(x,y).
\end{eqnarray*}
\item
Let $x\in [c2^{-n+1},(c+1)2^{-n+1})$ and $y\not\in [c2^{-n+1},(c+1)2^{-n+1})$. Then we have $\chi_n(x,y)=0$.

And let $y\in [c2^{-n+1},(c+1)2^{-n+1})$ and $x\not\in [c2^{-n+1},(c+1)2^{-n+1})$. Then we have $\chi_n(x,y)=0$.
\end{enumerate}
\end{lemma}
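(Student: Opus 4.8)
The plan is to isolate a single structural fact about the map $z \mapsto z \oplus 2^{-n}$ and then read off both parts of the lemma as easy consequences. That fact is: for $z \in [0,1)$, writing $c := \lfloor 2^{n-1} z \rfloor$ and $r := z - c 2^{-n+1} \in [0, 2^{-n+1})$, the dyadic digits of $z$ at positions $\ge n$ agree with those of $r$ (because $c2^{-n+1}$ has support only on positions $1,\dots,n-1$ and adding it to $r$ creates no carry past position $n-1$). In particular $z_n = r_n$, and hence
\[
z \oplus 2^{-n} \;=\; z + (-1)^{z_n} 2^{-n} \;=\; c2^{-n+1} + (r \oplus 2^{-n}),
\qquad r \oplus 2^{-n} \in [0, 2^{-n+1}),
\]
where $r \oplus 2^{-n}$ equals $r+2^{-n}$ if $r < 2^{-n}$ and $r - 2^{-n}$ if $r \ge 2^{-n}$, in either case lying in $[0,2^{-n+1})$. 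I would record two consequences at once: first, $x$ and $x \oplus 2^{-n}$ always lie in a common dyadic interval $[c2^{-n+1},(c+1)2^{-n+1})$, so the support of $\chi_n(x,\cdot)$, namely the closed interval $[\min(x,x\oplus 2^{-n}),\max(x,x\oplus 2^{-n})]$, is contained in $[c2^{-n+1},(c+1)2^{-n+1})$; second, adding an integer multiple of $2^{-n+1}$ to $z$ touches only digits at positions $\le n-1$, so it commutes with $\oplus 2^{-n}$.

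For part 1, I would take $x,y \in [0,2^{-n+1})$ and $c < 2^{n-1}$. By the second consequence, $(x + c2^{-n+1}) \oplus 2^{-n} = (x\oplus 2^{-n}) + c2^{-n+1}$, so the interval $[\min,\max]$ attached to $x + c2^{-n+1}$ is exactly the translate by $c2^{-n+1}$ of the one attached to $x$. Therefore $y + c2^{-n+1}$ lies in the translated interval if and only if $y$ lies in the original one, which is precisely $\chi_n(x+c2^{-n+1}, y+c2^{-n+1}) = \chi_n(x,y)$.

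For part 2, I would use the first consequence. In the first assertion, $x \in [c2^{-n+1},(c+1)2^{-n+1})$ gives $[\min(x,x\oplus2^{-n}),\max(x,x\oplus2^{-n})] \subseteq [c2^{-n+1},(c+1)2^{-n+1})$, so that $y$, being outside this dyadic interval, lies outside the support of $\chi_n(x,\cdot)$, whence $\chi_n(x,y)=0$. For the second assertion, pick $c'$ with $x \in [c'2^{-n+1},(c'+1)2^{-n+1})$; since $x$ is not in block $c$ we have $c' \ne c$, the support of $\chi_n(x,\cdot)$ sits inside block $c'$, which is disjoint from block $c$ containing $y$, and again $\chi_n(x,y)=0$.

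The only place demanding genuine care — the nearest thing to an obstacle — is the bookkeeping with dyadic expansions underpinning the structural fact: one needs the paper's convention that infinitely many digits vanish so that ``$z_n$'' is unambiguous, and one must verify that removing the integer multiple $c2^{-n+1}$ of $2^{-n+1}$ from $z \in [c2^{-n+1},(c+1)2^{-n+1})$ really leaves every digit at position $\ge n$ untouched (no borrow reaches position $n$). Once that is settled, the rest is pure translation and containment; the half-open versus closed interval conventions never cause trouble, since $\max(x,x\oplus2^{-n})$ is always strictly below the right endpoint $(c+1)2^{-n+1}$ of its block.
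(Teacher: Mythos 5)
Your proposal is correct and follows essentially the same route as the paper: the paper likewise rests item 1 on the translation identity $(x+c2^{-n+1})\oplus 2^{-n}=(x\oplus 2^{-n})+c2^{-n+1}$ and item 2 on the containment of $[\min(x,x\oplus 2^{-n}),\max(x,x\oplus 2^{-n})]$ in a single block $[c2^{-n+1},(c+1)2^{-n+1})$, obtained by the same two-case analysis you phrase as $r<2^{-n}$ versus $r\ge 2^{-n}$. Your only real difference is organizational — you extract one digit-level fact and derive both consequences from it, where the paper asserts the translation identity without proof — so nothing further is needed.
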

%%%%%%%%%%%%%%%%%%%%%%%%%%%%%%%%%%%%%%%%%%%%%%%
% end: Lemma, period_chi
%%%%%%%%%%%%%%%%%%%%%%%%%%%%%%%%%%%%%%%%%%%%%%%
%%%%%%%%%%%%%%%%%%%%%%%%%%%%%%%%%%%%%%%%%%%%%%%
% begin: Lemma, proof, period_chi
%%%%%%%%%%%%%%%%%%%%%%%%%%%%%%%%%%%%%%%%%%%%%%%
\begin{proof} 
% { \ }
\begin{enumerate}
\item
We have $(x+c2^{-n+1})\oplus 2^{-n}=(x\oplus 2^{-n})+c2^{-n+1}.$
Thus, the result follows from the fact that
\begin{eqnarray*}
y&&\in [ \min (x,x\oplus 2^{-n}),\max (x,x\oplus 2^{-n}) ]\\
&& \Longleftrightarrow  \\
y+c 2^{-n+1}&&\in [ \min (x+c 2^{-n+1},(x+c 2^{-n+1})\oplus 2^{-n}),\\
&&\max (x+c 2^{-n+1},(x+c 2^{-n+1})\oplus 2^{-n}) ].
\end{eqnarray*}
\item
We prove $\chi_n(x,y)=0$ in the case $x\in [c2^{-n+1},(c+1)2^{-n+1})$ and $y\not\in [c2^{-n+1},(c+1)2^{-n+1})$.
Let $x\in [d2^{-n},(d+1)2^{-n})$ for $d\in\mathbb{N}_0$ where $d=2c$ or $2c+1$.
When $d=2c$, it holds that $c2^{-n+1}\le x<x\oplus 2^{-n}=x+2^{-n}<(c+1)2^{-n+1}$.
In the case $d=2c+1$, it holds that $c2^{-n+1}\le x\oplus 2^{-n}=x-2^{-n}<x<(c+1)2^{-n+1}.$
So we have
\begin{eqnarray*}
[ \min (x,x\oplus 2^{-n}),\max (x,x\oplus 2^{-n}) ] \subset[c 2^{-n+1},(c+1)2^{-n+1}).
\end{eqnarray*}
So, if $y\not\in [c2^{-n+1},(c+1)2^{-n+1})$, we have $y\not\in [ \min (x,x\oplus 2^{-n}),\max (x,x\oplus 2^{-n}) ]$.
Then we obtain $\chi_n(x,y)=0$.

%When we consider
For the case $x\not\in [c2^{-n+1},(c+1)2^{-n+1})$ and $y\in [c2^{-n+1},(c+1)2^{-n+1})$,
there is some integer $e$ such that $x\in [e2^{-n+1},(e+1)2^{-n+1})$ and $y\not\in [e2^{-n+1},(e+1)2^{-n+1})$.
Thus the result follows from the above argument.
\end{enumerate}
We finish the proof.
\qquad \end{proof}

%%%%%%%%%%%%%%%%%%%%%%%%%%%%%%%%%%%%%%%%%%%%%%%
% end: Lemma, proof, period_chi
%%%%%%%%%%%%%%%%%%%%%%%%%%%%%%%%%%%%%%%%%%%%%%%
The function $\chi _n(x,y)$ is defined by using a characteristic function of $y$.   
In Lemma \ref{chi_y}, we rewrite $\chi _n(x,y)$ by using a characteristic function of $x$.
%%%%%%%%%%%%%%%%%%%%%%%%%%%%%%%%%%%%%%%%%%%%%%%
% begin: Lemma, proof, chi_y
%%%%%%%%%%%%%%%%%%%%%%%%%%%%%%%%%%%%%%%%%%%%%%%
\begin{lemma}
\label{chi_y}
Let $y\in[0,1)$ and $c,n\in\mathbb{N}_0$ satisfy $y\in [2^{-n}c,2^{-n}(c+1))$.

If $c=2c'$ for some integer $c'$, we have 
\begin{eqnarray*}
\chi _n(x,y)=
\left \{
\begin{array}{ll}
2^n & \mathrm{if}\ x\in[2^{-n}c,y]\cup [2^{-n}(c+1),y+2^{-n}],\\
0 & \mathrm{otherwise}.\\
\end{array}
\right.
\end{eqnarray*}
And if $c=2c'+1$ for some integer $c'$, we have 
\begin{eqnarray*}
\chi _n(x,y)=
\left \{
\begin{array}{ll}
2^n & \mathrm{if} \ x\in[y-2^{-n},2^{-n}c)\cup [y,2^{-n}(c+1)),\\
0  & \mathrm{otherwise}.\\
\end{array}
\right.
\end{eqnarray*}
\end{lemma}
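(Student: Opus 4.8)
The plan is to reduce the statement to an elementary case analysis driven by the $n$-th dyadic digit of $x$. Since $\chi_n$ takes only the values $0$ and $2^n$ (Definition \ref{W_def}), it suffices to identify the set of $x\in[0,1)$ for which $y\in[\min(x,x\oplus 2^{-n}),\max(x,x\oplus 2^{-n})]$. First I would invoke Lemma \ref{period_chi}(2): whenever $x$ and $y$ fail to lie in a common interval $[e2^{-n+1},(e+1)2^{-n+1})$ we have $\chi_n(x,y)=0$. Hence it is enough to consider $x$ in the length-$2^{-n+1}$ dyadic interval $I$ that contains $y$; from $y\in[2^{-n}c,2^{-n}(c+1))$ one reads off $I=[2^{-n}c,2^{-n}(c+2))$ when $c$ is even and $I=[2^{-n}(c-1),2^{-n}(c+1))$ when $c$ is odd, which is exactly what produces the two global cases in the statement.

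Next I would split $I$ into its two halves of length $2^{-n}$. On each half the index $d$ with $x\in[2^{-n}d,2^{-n}(d+1))$ has a fixed parity, so the $n$-th digit $x_n$ is constant there, and by the definition of $\oplus$ in Definition \ref{dfn_dyadic_diff} we get $x\oplus 2^{-n}=x+2^{-n}$ on the half with $x_n=0$ and $x\oplus 2^{-n}=x-2^{-n}$ on the half with $x_n=1$. Substituting this into the membership condition, and using the two inequalities $2^{-n}c\le y$ and $y<2^{-n}(c+1)$ that come with the hypothesis on $y$, the condition $y\in[\min(x,x\oplus 2^{-n}),\max(x,x\oplus 2^{-n})]$ collapses on each half to a single one-sided inequality on $x$, since one of its two bounds is made automatic by the constraints on $y$. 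Concretely, for $c$ even this gives $x\in[2^{-n}c,y]$ on the first half and $x\in[2^{-n}(c+1),y+2^{-n}]$ on the second; for $c$ odd it gives $x\in[y-2^{-n},2^{-n}c)$ and $x\in[y,2^{-n}(c+1))$. Taking the union of the two sub-intervals (and recording $\chi_n=0$ elsewhere, which is Lemma \ref{period_chi}(2)) yields the two displayed formulas.

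The only real care needed is bookkeeping: tracking the parity of the interval index correctly, checking in each of the four subcases which of the two defining inequalities for membership is rendered vacuous, and being consistent about closed versus half-open endpoints (note the statement has $[\,\cdot\,]$ on the ``$y$-side'' intervals and half-open brackets on the ``dyadic-grid side''). I do not expect a genuine obstacle here; once Lemma \ref{period_chi}(2) has localised the support in $x$, the computation is entirely routine.
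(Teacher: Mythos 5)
Your proposal is correct and follows essentially the same route as the paper's proof: localise the support in $x$ via Item 2 of Lemma \ref{period_chi}, then split the remaining length-$2^{-n+1}$ interval into its two dyadic halves, on each of which $x\oplus 2^{-n}=x\pm 2^{-n}$ according to the parity of the subinterval index, and read off the one-sided condition on $x$. The only cosmetic difference is that the paper writes out only the case $c=2c'$ and dismisses the odd case as analogous, whereas you treat both in parallel.
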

%%%%%%%%%%%%%%%%%%%%%%%%%%%%%%%%%%%%%%%%%%%%%%%
% end: Lemma, chi_y
%%%%%%%%%%%%%%%%%%%%%%%%%%%%%%%%%%%%%%%%%%%%%%%
%%%%%%%%%%%%%%%%%%%%%%%%%%%%%%%%%%%%%%%%%%%%%%%
% begin: Lemma, proof, chi_y
%%%%%%%%%%%%%%%%%%%%%%%%%%%%%%%%%%%%%%%%%%%%%%%
\begin{proof}
We only prove the case $c=2c'$ here since the case $c=2c'+1$ follows from the same argument.
In this case, by Item 2 of Lemma \ref{period_chi}, we have that for all $y\in[ 2^{-n+1}c',2^{-n+1}(c'+1) )$,
\begin{equation}
\label{tri_0}
\chi _n(x,y)=0, \quad x\not\in[2^{-n+1}c',2^{-n+1}(c'+1)).
\end{equation}
Let $x\in[0,1)$ and $d\in\mathbb{N}_0$ satisfy $x\in [2^{-n}d,2^{-n}(d+1))$.
We calculate $\chi _n(x,y)$ in the three cases: $d\not\in\{2c',2c'+1\}$, $d=2c'$ and $d=2c'+1$. 

We consider the case $d\not\in\{2c',2c'+1\}$. By condition (\ref{tri_0}), we have
\begin{eqnarray*}
\chi _n(x,y)=0, \quad x\in [2^{-n}d,2^{-n}(d+1)).
\end{eqnarray*}

In the case $d=2c'$, since $x\oplus 2^{-n}=x+ 2^{-n}$, we have 
\begin{eqnarray*}
\chi _n(x,y)=2^n\Longleftrightarrow
\left \{
\begin{array}{l}
x\le y \le x+2^{-n},\\
2^{-n+1}c'\le x,y <2^{-n}(2c'+1).\\
\end{array}
\right.
\end{eqnarray*}
So we have
\begin{eqnarray*}
\chi _n(x,y)=
\left \{
\begin{array}{ll}
2^n & \mathrm{if}\ x\in [2^{-n+1}c',y],\\
0  &  \mathrm{if}\ x\in (y,2^{-n}(2c'+1)).\\
\end{array}
\right.
\end{eqnarray*}

When $d=2c'+1$, by a similar argument to the case $d=2c'$, we have
\begin{eqnarray*}
\chi _n(x,y)=
\left \{
\begin{array}{ll}
2^n & \mathrm{if}\ x\in [2^{-n}(2c'+1),y+2^{-n}],\\
0  & \mathrm{if}\ x\in (y+2^{-n},2^{-n}(2c'+2)).\\
\end{array}
\right.
\end{eqnarray*}
By combining these cases, we have the result.
\qquad \end{proof}

In the last lemma, we show the period of a function $W(k)$.
\begin{lemma}
\label{period_W}
%Let $a_1,\dots,a_N\in\mathbb{N}_0$ satisfying $a_1>\dots>a_N$.
We have that $W(k)$ is a periodic function with period $2^{-a_N}$
for a positive integer $k=\sum_{i=1}^N2^{a_i}.$
\end{lemma}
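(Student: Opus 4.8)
The plan is to prove Lemma \ref{period_W} by induction on $N$, the number of terms in the dyadic expansion $k=\sum_{i=1}^N 2^{a_i}$ with $a_1>\cdots>a_N$. The period claimed is $2^{-a_N}$, i.e.\ governed by the \emph{smallest} exponent, which corresponds to the \emph{last} factor $\chi_{a_N+1}$ introduced in the inductive construction of $W(k)$ in Definition \ref{W_def}. So the natural inductive variable is $N$, peeling off the outermost integration against $\chi_{a_N+1}(x,y)$.

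\medskip

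\noindent\textbf{Base case.} For $N=1$, $W(2^{a_1})(y)=\int_0^1\chi_{a_1+1}(x,y)\,dx$. By Item~1 of Lemma \ref{period_chi} (with $n=a_1+1$), $\chi_{a_1+1}(x+c2^{-a_1},y+c2^{-a_1})=\chi_{a_1+1}(x,y)$ for $0\le c<2^{a_1}$; integrating over $x\in[0,1)$ and using this $2^{-a_1}$-periodicity of the integrand in the diagonal direction shows that shifting $y$ by $2^{-a_1}$ permutes the intervals $[c2^{-a_1},(c+1)2^{-a_1})$ cyclically and leaves the integral unchanged. Hence $W(2^{a_1})$ has period $2^{-a_1}$.

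\medskip

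\noindent\textbf{Inductive step.} Assume $W(k')$ has period $2^{-a_{N-1}}$ for $k'=2^{a_1}+\cdots+2^{a_{N-1}}$. By definition,
\begin{eqnarray*}
W(k)(y)=\int_0^1\chi_{a_N+1}(x,y)\,W(k')(x)\,dx.
\end{eqnarray*}
I want to show $W(k)(y+2^{-a_N})=W(k)(y)$. Using Item~1 of Lemma \ref{period_chi} with $n=a_N+1$ and $c=1$, we have $\chi_{a_N+1}(x,y)=\chi_{a_N+1}(x+2^{-a_N},y+2^{-a_N})$ for $x,y$ in a block, and more globally $\chi_{a_N+1}(x+2^{-a_N},y+2^{-a_N})=\chi_{a_N+1}(x,y)$ wherever both sides make sense on $[0,1)$ (with the blockwise/periodic structure from Lemma \ref{period_chi}). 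Substituting $x\mapsto x-2^{-a_N}$ (equivalently relabelling the blocks $[c2^{-a_N},(c+1)2^{-a_N})$ cyclically, which is exactly what Lemma \ref{period_chi} permits since $\chi_{a_N+1}$ vanishes off the diagonal block), we get
\begin{eqnarray*}
W(k)(y+2^{-a_N})=\int_0^1\chi_{a_N+1}(x,y)\,W(k')(x+2^{-a_N})\,dx.
\end{eqnarray*}
Now $2^{-a_N}$ is an integer multiple of the period $2^{-a_{N-1}}$ of $W(k')$ (because $a_N<a_{N-1}$, so $2^{-a_N}=2^{a_{N-1}-a_N}\cdot 2^{-a_{N-1}}$ with $a_{N-1}-a_N\ge 1$), hence $W(k')(x+2^{-a_N})=W(k')(x)$ by the induction hypothesis. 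Therefore $W(k)(y+2^{-a_N})=W(k)(y)$, completing the induction.

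\medskip

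\noindent\textbf{Main obstacle.} The delicate point is the change of variables in the outer integral: one must handle the boundary effect near $x=1$ (where $x+2^{-a_N}$ or $x-2^{-a_N}$ leaves $[0,1)$) correctly, and argue that because $\chi_{a_N+1}(x,y)$ is supported only on the diagonal block $[c2^{-a_N}\text{-}$sized block containing $y$] (Item~2 of Lemma \ref{period_chi}), the shift amounts to a cyclic permutation of the $2^{a_N}$ subintervals of length $2^{-a_N}$ rather than a genuine translation off the unit square. This bookkeeping — matching the periodicity of $\chi_{a_N+1}$ in the $(x,y)$ pair with the already-established periodicity of $W(k')$ in $x$ alone — is the one step requiring care; everything else is a direct appeal to Lemmas \ref{period_chi} and the recursive definition.
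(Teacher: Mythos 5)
Your proposal is correct and follows essentially the same route as the paper: induction on $N$, using Item 1 of Lemma \ref{period_chi} to translate $\chi_{a+1}$ along the diagonal, Item 2 to localize the integral to a single block of length $2^{-a}$ (which disposes of the boundary issue you flag), and the observation that $2^{-a_N}$ is an integer multiple of the period $2^{-a_{N-1}}$ of $W(k')$ so the induction hypothesis applies. The only cosmetic difference is that the paper works with shifts $y\mapsto y+c2^{-a_N}$ for $y$ in the fundamental block $[0,2^{-a_N})$ rather than a single shift by $2^{-a_N}$, which is equivalent.
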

\begin{proof}
We proceed by induction on $N$. We prove the result for $k=2^{a_1}$.
Let $c\in\mathbb{N}_0$ satisfying $c< 2^{a_1}.$
%Using Lemma \ref{period_chi},
We have that for $y\in[0,2^{-a_1})$,
\begin{eqnarray*}
& &W(2^{a_1})(y+c 2^{-a_1})
=\int_0^1\chi_{a_1+1}(x,y+c 2^{-a_1})\, dx\\
& &=\int_{c 2^{-a_1}}^{(c+1) 2^{-a_1}}\chi_{a_1+1}(x,y+c 2^{-a_1})\, dx
=\int_0^{2^{-a_1}} \chi_{a_1+1}(z+c 2^{-a_1},y+c 2^{-a_1})\, dz\\
& &=\int_0^{2^{-a_1}} \chi_{a_1+1}(z,y)\, dz
=\int_0^1 \chi_{a_1+1}(z,y)\, dz
=W(2^{a_1})(y).
\end{eqnarray*}
The second and fifth equalities follow from Item 2 of Lemma \ref{period_chi},
the forth equality follows from Item 1 of Lemma \ref{period_chi} and
 the change of variable $x=z+c 2^{-a_1}$ in the third equality.

Now we assume that the lemma holds for the case $k_N=\sum_{i=1}^{N}2^{a_i}$.
We prove the result for the case $k=\sum_{i=1}^{N+1}2^{a_i}$ satisfying $a_{N+1}<a_N$.
By the induction assumption, we have that $W(k_N)(z+d2^{-a_{N}})=W(k_N)(z)$ for $z\in[0,2^{-a_{N}})$ and an integer $d$ satisfying $0\le d<2^{a_N}$.
Let $c\in\mathbb{N}_0$ satisfying $c<2^{a_{N+1}}$.
Then we have that for $y\in[0,2^{-a_{N+1}})$,
\begin{eqnarray*}
& &W(k)(y+c 2^{-a_{N+1}})=W(k_N+2^{a_{N+1}})(y+c 2^{-a_{N+1}})\\
& &=\int_0^1\chi_{a_{N+1}+1}(x,y+c 2^{-a_{N+1}})\cdot W(k_N)(x)\, dx\\
& &=\int_{c 2^{-a_{N+1}}}^{(c+1) 2^{-a_{N+1}}}\chi_{a_{N+1}+1}(x,y+c 2^{-a_{N+1}})\cdot W(k_N)(x)\, dx\\
& &=\int_0^{2^{-a_{N+1}}} \chi_{a_{N+1}+1}(z+c 2^{-a_{N+1}},y+c 2^{-a_{N+1}})\cdot W(k_N)(z+c 2^{-a_{N+1}})\, dz.
\end{eqnarray*}
The third equality follows from Item 2 of Lemma \ref{period_chi}
 and the change of variables $x=z+c 2^{-a_{N+1}}$ in the last equality.
By the induction assumption, we have $W(k_N)(z+c2^{-a_{N+1}})=W(k_N)(z)$ for $z\in [0,2^{-a_{N+1}})$.
Thus we obtain
\begin{eqnarray*}
W(k)(y+c 2^{-a_{N+1}})
=\int_0^{2^{-a_{N+1}}} \chi_{a_{N+1}+1}(z+c 2^{-a_{N+1}},y+c 2^{-a_{N+1}})\cdot W(k_N)(z)\, dz.
\end{eqnarray*}
Then we continue the computation as follows:
\begin{eqnarray*}
& &W(k)(y+c 2^{-a_{N+1}})
=\int_0^{2^{-a_{N+1}}} \chi_{a_{N+1}+1}(z,y)\cdot W(k_N)(z)\, dz\\
&&=\int_0^1 \chi_{a_{N+1}+1}(z,y)\cdot W(k_N)(z)\, dz
=W(k)(y).
\end{eqnarray*}
The first equality follows from Item 1 of Lemma \ref{period_chi} and
the second equality follows from Item 2 of Lemma \ref{period_chi}.
\qquad \end{proof}
%%%%%%%%%%%%%%%%%%%%%%%%%%%%%%%%%%%%%%%%%%%%%%%
% end: Lemma, proof, chi_y
%%%%%%%%%%%%%%%%%%%%%%%%%%%%%%%%%%%%%%%%%%%%%%%
%%%%%%%%%%%%%%%%%%%%%%%%%%%%%%%%%%%%%%%%%%%%%%%
% end: subSection, Important properties of $\chi_n(x,y)$ and $W(\mathbf{k})$
%%%%%%%%%%%%%%%%%%%%%%%%%%%%%%%%%%%%%%%%%%%%%%%
%%%%%%%%%%%%%%%%%%%%%%%%%%%%%%%%%%%%%%%%%%%%%%%
% begin: subSection, Proof of Lemma \ref{prop_{L^1}_{L^1}}
%%%%%%%%%%%%%%%%%%%%%%%%%%%%%%%%%%%%%%%%%%%%%%%
\subsubsection{Proof of Lemma \ref{prop_{L^1}_{L^1}}}
We prove Lemma \ref{prop_{L^1}_{L^1}} using the results in the above subsection.
%%%%%%%%%%%%%%%%%%%%%%%%%%%%%%%%%%%%%%%%%%%%%%%
% begin: Lemma, proof, prop_{L^1}_{L^1}
%%%%%%%%%%%%%%%%%%%%%%%%%%%%%%%%%%%%%%%%%%%%%%%
\begin{proof}
%{\bf Lemma \ref{prop_{L^1}_{L^1}}}\\
Since $\chi_n\ge 0$,
we see that $W(k)\geq 0$ on $[0,1)$ by induction. We omit the details.
We use this property of $W(k)\ge 0$ to prove $\| W(k)\| _{L^p}\le 2^{(1-\frac{1}{p})}$. 

Using H\"{o}lder's inequality, we have 
\begin{eqnarray*}
\| W(k)\| _{L^p}^p=\int_{[0,1)}|W(k)(x)|\cdot | W(k)(x)^{p-1}|\,dx
\leq \| W(k)\| _{L^1} \| W(k)\| _{L^\infty}^{p-1}.
\end{eqnarray*}
Thus we have 
\begin{eqnarray*}
\| W(k)\| _{L^p}\leq \| W(k)\| _{L^1}^{\frac{1}{p}} \| W(k)\| _{L^\infty}^{1-\frac{1}{p}}.
\end{eqnarray*}
Then, if we have
$\| W(k)\| _{L^1}=1$ and $\| W(k)\| _{L^{\infty}} = 2$,
we have 
\begin{eqnarray*}
\| W(k)\| _{L^p}\leq \| W(k)\| _{L^1}^{\frac{1}{p}} \| W(k)\| _{L^\infty}^{1-\frac{1}{p}}= 2^{(1-\frac{1}{p})}.
\end{eqnarray*}
Therefore we prove 
$\| W(k)\| _{L^1}=1$ and $\| W(k)\| _{L^{\infty}} = 2$ for any $k\in\mathbb{N}$ to complete the proof.

%When $k=0$, we have $\| W(0)\| _{L^1}=1$ and $\| W(0)\| _{L^{\infty}}=1\le 2$ since $W(0)=1$.
We prove the case $k=\sum_{i=1}^N2^{a_i}$ by induction on $N$.
In the case $k=2^{a_1}$, we have
\begin{eqnarray*}
\| W(2^{a_1})\| _{L^1}&=&\int_0^1\int_0^1\chi_{a_1+1}(x,y)\, dydx\\
&=&\int_0^1\int_{\min (x,x\oplus 2^{-a_1-1})}^{\max (x,x\oplus 2^{-a_1-1})}2^{a_1+1}\, dy \, dx=1.
\end{eqnarray*}
The first equality follows from $\chi_{a_1+1}(x,y)\ge 0$.
We prove $\| W(2^{a_1})\| _{L^{\infty}}=2$ to complete the case $k=2^{a_1}$.
By Lemma \ref{period_W}, we have
\begin{eqnarray*}
& &\| W(2^{a_1})\|_{L^{\infty}} 
=\sup_{y\in [0,1)}\left| W(2^{a_1})(y)\right|
=\sup_{y\in [0,2^{-a_1})}\left| W(2^{a_1})(y)\right| .
\end{eqnarray*}
Since $W(2^{a_1})(y)=\int_0^1\chi_{a_1+1}(x,y)\, dx$ and $\chi_{a_1+1}(x,y)\ge 0$, we have
\begin{eqnarray*}
& &\| W(2^{a_1})\|_{L^{\infty}}=\sup_{y\in [0,2^{-a_1})}\int_0^1\chi_{a_1+1}(x,y)\, dx\\
& &=\max\left(\sup_{y\in [0,2^{-a_1-1})}\int_0^1\chi_{a_1+1}(x,y)\, dx,\sup_{y\in [2^{-a_1-1},2^{-a_1})}\int_0^1\chi_{a_1+1}(x,y)\, dx\right) .
\end{eqnarray*}
We calculate the supremum on $[0,2^{-a_1-1})$ and $[2^{-a_1-1},2^{-a_1})$ separately.
We assume that $y\in [0,2^{-a_1-1})$. By Lemma \ref{chi_y}, we have
\begin{eqnarray*}
\int_0^1\chi_{a_1+1}(x,y)\, dx=
\int_{0}^y 2^{a_1+1}\, dx+\int_{2^{-a_1-1}}^{2^{-a_1-1}+y} 2^{a_1+1}\, dx.
\end{eqnarray*}
If we choose $y=2^{-a_1-1}$, we can maximize the right hand side.
Hence we obtain 
\begin{eqnarray*}
\sup_{y\in [0,2^{-a_1-1})}\int_0^1\chi_{a_1+1}(x,y)\, dx
= \int_0^{2^{-a_1}}2^{a_1+1}\, dx_1=2.
\end{eqnarray*}
By the same argument we get the same result in the case $y\in [2^{-a_1-1},2^{-a_1})$. 
We omit the details.
Therefore we have $\| W(2^{a_1})\|_{L^{\infty}}=2$.

Now we assume that for any $k_N=\sum_{i=1}^{N}2^{a_i}$, we have that
$\| W(k_N)\| _{L^1}=1$ and $\| W(k_N)\| _{L^{\infty}}= 2$.
Let $k=k_N+2^{a_{N+1}}$ satisfying $a_N>a_{N+1}$.
We prove that for any $k=k_N+2^{a_{N+1}}$, $\| W(k)\| _{L^1}=1$ and $\| W(k)\| _{L^{\infty}}= 2$.
By the fact that $W(k_N+2^{a_{N+1}})\ge 0$ and Fubini's Theorem, we have 
\begin{eqnarray*}
& &\| W(k_N+2^{a_{N+1}})\| _{L^1}=\int_0^1\int_0 ^1\chi _{a_{N+1}+1}(x,y)W(k_N)(x) \, dxdy=\int_0^1 W(k_N)(x) \, dx.
\end{eqnarray*}
By the fact $W(k_N)\ge 0$ and the assumption on $k_N$, we obtain
$\| W(k_N+2^{a_{N+1}})\| _{L^1}=\| W(k_N)\| _{L^1}=1$.

We prove $\| W(k)\| _{L^{\infty}}= 2$ as follows. 
By the fact $W(k_N+2^{a_{N+1}})\ge 0$ and Lemma \ref{period_W}, we have
\begin{eqnarray*}
\| W(k)\| _{L^{\infty}}&&=\sup_{y\in [0,2^{-a_{N+1}})} W(k_N+2^{a_{N+1}})\\
&&=\max\Big(\sup_{y\in [0,2^{-a_{N+1}-1})} \int_0^1\chi _{a_{N+1}+1}(x,y)W(k_N)(x) \, dx,\\
& &\sup_{y\in [2^{-a_{N+1}-1},2^{-a_{N+1}})} \int_0^1\chi _{a_{N+1}+1}(x,y)W(k_N)(x) \, dx\Big) .
\end{eqnarray*}
We can calculate the supremum by the fact $W(k_N)\ge 0$ and the same method as in the case $k=2^{a_1}$:
\begin{eqnarray*}
\left.
\begin{array}{ll}
\sup_{y\in [0,2^{-a_{N+1}-1})} \int_0^1\chi _{a_{N+1}+1}(x,y)W(k_N)(x) \, dx\\ 
\sup_{y\in [2^{-a_{N+1}-1},2^{-a_{N+1}})} \int_0^1\chi _{a_{N+1}+1}(x,y)W(k_N)(x) \, dx\\
\end{array}
\right\}
=\int_0^{2^{-a_{N+1}}}2^{a_{N+1}+1}W(k_N)(x) \, dx.
\end{eqnarray*}
Then we obtain
\begin{eqnarray*}
\| W(k)\| _{L^{\infty}}
& &= \int_0^{2^{-a_{N+1}}}2^{a_{N+1}+1}W(k_N)(x) \, dx
=2^{a_{N+1}+1}\sum_{i=0}^{2^{a_{N}-a_{N+1}}-1} \int_{i2^{-a_N}}^{(i+1)2^{-a_{N}}}W(k_N)(x) \, dx\\
& &=2^{a_{N+1}+1}\sum_{i=0}^{2^{a_{N}-a_{N+1}}-1} \int_{0}^{2^{-a_{N}}}W(k_N)(x) \, dx=2^{a_{N}+1}\cdot \int_{0}^{2^{-a_{N}}}W(k_N)(x) \, dx.
\end{eqnarray*}
The second equality follows from Lemma \ref{period_W}.
Thus we have
\begin{eqnarray*}
& &\| W(k)\| _{L^{\infty}}
= 2^{a_{N}+1}\cdot \int_0^{2^{-a_{N}}}W(k_N)(x) \, dx\\
& &=2\sum_{i=0}^{2^{a_N}-1}\int_{i2^{-a_N}}^{(i+1)2^{-a_N}} W(k_N)(x) \, dx
=2\cdot\int_0^1 W(k_N)(x) \, dx.
\end{eqnarray*}
The first equality follows from Lemma \ref{period_W}.
By the assumption on $k_N$ and the fact $W(k_N)\ge 0$,
it follows that $\int_0^1 W(k_N)(x) \, dx=\| W(k_N)\| _{L^1}=1$, and hence we obtain
$\| W(k)\| _{L^{\infty}}=2\cdot\int_0^1 W(k_N)(x) \, dx= 2$.
\qquad \end{proof}
%%%%%%%%%%%%%%%%%%%%%%%%%%%%%%%%%%%%%%%%%%%%%%%
% end: Lemma, proof, prop_{L^1}_{L^1}
%%%%%%%%%%%%%%%%%%%%%%%%%%%%%%%%%%%%%%%%%%%%%%%
%%%%%%%%%%%%%%%%%%%%%%%%%%%%%%%%%%%%%%%%%%%%%%%
% begin: Section, Acknowledgement
%%%%%%%%%%%%%%%%%%%%%%%%%%%%%%%%%%%%%%%%%%%%%%%
%\section*{Acknowledgement}
%The author would like to thank Prof.\ Makoto Matsumoto and Prof.\ Josef Dick for helpful discussions and comments.
%The works of T.~Y. was supported by the Program for Leading Graduate Schools, MEXT, Japan.
%%%%%%%%%%%%%%%%%%%%%%%%%%%%%%%%%%%%%%%%%%%%%%%
% end: Section, Acknowledgement
%%%%%%%%%%%%%%%%%%%%%%%%%%%%%%%%%%%%%%%%%%%%%%%
%%%%%%%%%%%%%%%%%%%%%%%%%%%%%%%%%%%%%%%%%%%%%%%
% begin: thebibliography thebibliography thebibliography
%%%%%%%%%%%%%%%%%%%%%%%%%%%%%%%%%%%%%%%%%%%%%%%

%%%%%%%%%%%%%%%%%%%%%%%%%%%%%%%%%%%%%%%%%%%%%%%
% begin: thebibliography thebibliography thebibliography
%%%%%%%%%%%%%%%%%%%%%%%%%%%%%%%%%%%%%%%%%%%%%%%
\end{document}